\theoremstyle{plain}
\newtheorem{thm}{Theorem}
\newtheorem{defn}{Definition}
\newtheorem{prop}{Proposition}
\newtheorem{cor}{Corollary}
\newtheorem{bem}{Remark}
\newcommand{\vecII}[2]{
\ensuremath{
\begin{pmatrix}
#1 \\ #2 \\
\end{pmatrix}}}
\newcommand{\matII}[4]{
\ensuremath{ 
\begin{pmatrix}  
#1 & #2 \\
#3 & #4 \\
\end{pmatrix}}}
\providecommand{\N}{\mathbb{N}}
\providecommand{\R}{\mathbb{R}}
\providecommand{\Z}{\mathbb{Z}}
\DeclareMathOperator{\cn}{cn}
\DeclareMathOperator{\sn}{sn}
\DeclareMathOperator{\dn}{dn}
\DeclareMathOperator{\sign}{sign}
\renewcommand{\qed}{\hfill $\Box$}
\begin{document}

\allowdisplaybreaks

\title{Boundary value problems for Willmore curves in $\R^2$}

\author{Rainer Mandel}
\address{R. Mandel \hfill\break
Scuola Normale Superiore di Pisa \hfill\break
I - Pisa, Italy}
\email{Rainer.Mandel@sns.it}
\date{}

\subjclass[2000]{Primary: }
\keywords{}

\begin{abstract}
  In this paper the Navier problem and the Dirichlet problem for Willmore curves in $\R^2$ is solved. 
\end{abstract}

\maketitle

 \section{Introduction}
 
 \parindent0mm
 
  The Willmore functional of a given smooth regular curve $\gamma\in C^\infty([0,1],\R^n)$ is given by
  $$
    W(\gamma) := \frac{1}{2}\int_\gamma \kappa_\gamma^2   
  $$
  where $\kappa_\gamma$ denotes the signed curvature function of $\gamma$.
  The value $W(\gamma)$ can be interpreted as a bending energy of the curve $\gamma$ which one usually tends
  to minimize in a suitable class of admissible curves where the latter is chosen according to the mathematical
  or real-world problem one is interested in. One motivation for considering boundary value
  problems for Willmore curves originates from the industrial fabrication process called wire bonding
  \cite{Koo_ideal_bent_contours}. Here, engineers aim at interconnecting small semiconductor devices via wire
  bonds in such a way that energy losses within the wire are smallest possible. Typically the initial position
  and direction as well as the end position and direction of the curve are prescribed by the arrangement of the
  semiconductor devices. Neglecting energy losses due to the length of a wire one ends up with studying the
  Dirichlet problem for Willmore curves in $\R^3$. In the easier two-dimensional case the Dirichlet problem
  consists of finding a regular curve $\gamma\in C^\infty([0,1],\R^2)$ satisfying
  \begin{align} \label{Gl Dirichlet BVP}
    W'(\gamma)=0,\qquad 
    \gamma(0)=A,\; \gamma(1)=B,\qquad
    \frac{\gamma'(0)}{\|\gamma'(0)\|}= \vecII{\cos(\theta_1)}{\sin(\theta_1)},\;
    \frac{\gamma'(1)}{\|\gamma'(1)\|}= \vecII{\cos(\theta_2)}{\sin(\theta_2)}
  \end{align}
  for given boundary data $A,B\in\R^2$ and $\theta_1,\theta_2\in\R$. Here we used the shorthand notation
  $W'(\gamma)=0$ in order to indicate that $\gamma$ is a Willmore curve which, by
  definition, is a critical point of the Willmore functional. 
  Our main result, however, concerns the Navier problem for Willmore curves in $\R^2$ where one looks for
  regular curves $\gamma\in C^\infty([0,1],\R^2)$ satisfying
  \begin{align} \label{Gl Navier BVP}
    W'(\gamma)=0,\qquad \gamma(0)=A,\;\gamma(1)=B,\qquad
    \kappa_\gamma(0)=\kappa_1,\;\kappa_\gamma(1)=\kappa_2
  \end{align}
  for given $A,B\in\R^2$ and $\kappa_1,\kappa_2\in\R$. 
  In the special case $\kappa_1=\kappa_2=:\kappa$ the
  boundary value problem  \eqref{Gl Navier BVP} was investigated by Deckelnick and Grunau
  \cite{DecGru_Boundary_value_problems} under the restrictive assumption that $\gamma$ is a smooth symmetric
  graph lying on one side of the straight line joining $A$ and $B$. More precisely 
  they assumed $A=(0,0),B=(1,0)$ as well as $\gamma(t)=(t,u(t))\,(t\in [0,1])$ for some positive
  symmetric function $u\in C^4(0,1)\cap C^2[0,1]$. They found that there is a positive number $M_0\approx 1.34380$ such
  that \eqref{Gl Navier BVP} has precisely two graph solutions for $0<|\kappa|<M_0$, precisely one graph solution for $|\kappa|\in\{0,M_0\}$ and no
  graph solutions otherwise, see Theorem~1 in \cite{DecGru_Boundary_value_problems}.
  In a subsequent paper \cite{DecGru_Stability_and_symmetry} the same authors proved stability and symmetry
  results for such Willmore graphs. Up to the author's knowledge no further results related to the Navier
  problem are known. The aim of our paper is to fill this gap in the literature by providing
  the complete solution theory both for the Navier problem and for the Dirichlet problem. Special attention
  will be paid to the symmetric Navier problem
  \begin{align} \label{Gl Navier BVP symm}
    W'(\gamma)=0,\qquad
    \gamma(0)=A,\; \gamma(1)=B,\qquad
    \kappa_\gamma(0)=\kappa,\; \kappa_\gamma(1)=\kappa 
  \end{align}
  which, as we will show in Theorem \ref{Thm Navier kappa1=kappa2}, admits a beautiful solution theory
  extending the results obtained in \cite{DecGru_Boundary_value_problems} which we described above. Our
  results concerning this boundary value problem not only address existence and multiplicity issues but also
  qualitative information about the solutions are obtained. In \cite{DecGru_Boundary_value_problems} the
  authors asked whether symmetric boundary conditions imply the symmetry of the solution. In order to analyse
  such interesting questions not only for Willmore graphs as in
  \cite{DecGru_Boundary_value_problems,DecGru_Stability_and_symmetry} but also for general Willmore curves we
  fix the notions of axially symmetric respectively pointwise symmetric regular curves which we believe to be natural.
   
  \begin{defn}
    Let $\gamma\in C^\infty([0,1],\R^2)$ be a regular curve of length $L>0$ and let $\hat\gamma$ denote its
    arc-length parametrization. Then $\gamma$ is said to be
    \begin{itemize}
      \item[(i)] axially symmetric if there is an orthogonal matrix $P\in O(2)$ satisfying $\det(P)=-1$
      such that $s\mapsto \hat\gamma(s)+P\hat\gamma(L-s)$ is constant on $[0,L]$,
      \item[(ii)] pointwise symmetric if $s\mapsto \hat\gamma(s)+\hat\gamma(L-s)$ is constant on $[0,L]$.
    \end{itemize}
  \end{defn}
   
   Let us shortly explain why these definitions are properly chosen. First, being axially or
   pointwise symmetric does not depend on the parametrization of the curve, which is a necessary condition for
   a geometrical property. Second, the definition from (i) implies
   $P(\hat\gamma(L)-\hat\gamma(0))=\hat\gamma(L)-\hat\gamma(0)$ so that $P$ realizes a
   reflection about the straight line from $\gamma(0)=\hat\gamma(0)$ to $\gamma(1)=\hat\gamma(L)$. Hence, (i)
   may be considered as a natural generalization of the notion of a symmetric function. Similarly, (ii) extends the notion of an odd
   function. Since our results describe the totality of Willmore curves in $\R^2$ we will need more refined
   ways of distinguishing them. One way of defining suitable types of Willmore curves is based on the observation
   (to be proved later) that the curvature function of the arc-length parametrization of such a curve
   is given by
   \begin{equation} \label{Gl kappa_ab}
     \kappa_{a,b}(s) = \sqrt{2}a \cn(as+b)   \qquad a\geq 0,\; -T/2\leq b<T/2.
   \end{equation}
   Here, the symbol $\cn$ denotes Jacobi's elliptic function with modulus $1/\sqrt{2}$ and periodicity $T>0$,
   cf. \cite{AbrSte_Handbook}, chapter 16. In view of this result it turns out that the following notion of a
   $(\sigma_1,\sigma_2,j)$-type solution provides a suitable classification of all Willmore curves except the
   straight line (which corresponds to the special case $a=0$ in \eqref{Gl kappa_ab}).
 
  \begin{defn} \label{Def sigma1sigma2j}
    Let $\gamma\in C^\infty([0,1],\R^2)$ be a Willmore curve of length $L>0$ such that the curvature
    function of its arc-length parametrization is $\kappa_{a,b}$ for $a>0$ and $b\in [-T/2,T/2)$. Then $\gamma$ is said to be
    of type $(\sigma_1,\sigma_2,j)$ if we have 
    $$
      \sign(b)=\sigma_1,\qquad
      \sign(b+aL)=\sigma_2,\qquad
      jT \leq aL<(j+1)T.
    $$
  \end{defn}
  
  Here, the sign function is given by $\sign(z):=1$ for $z\geq 0$ and $\sign(z)=-1$ for $z<0$. Roughly
  speaking, a Willmore curve is of type $(\sigma_1,\sigma_2,j)$ if its curvature function runs through the
  repetitive pattern given by the $\cn$-function more than $j$ but less than $j+1$ times and is increasing or
  decreasing at its initial point respectively end point according to $\sigma_1=\pm 1$ respectively
  $\sigma_2=\pm 1$. This particular shape of the curvature function of a Willmore curve clearly induces a
  special shape of the Willmore curve itself as the following examples show.
  
  \medskip

  \begin{figure}[!htb] 
    \centering
    \subfigure[$\sigma_1=1,\sigma_2=-1,j=1$
    ]{
      \includegraphics[scale=.3]{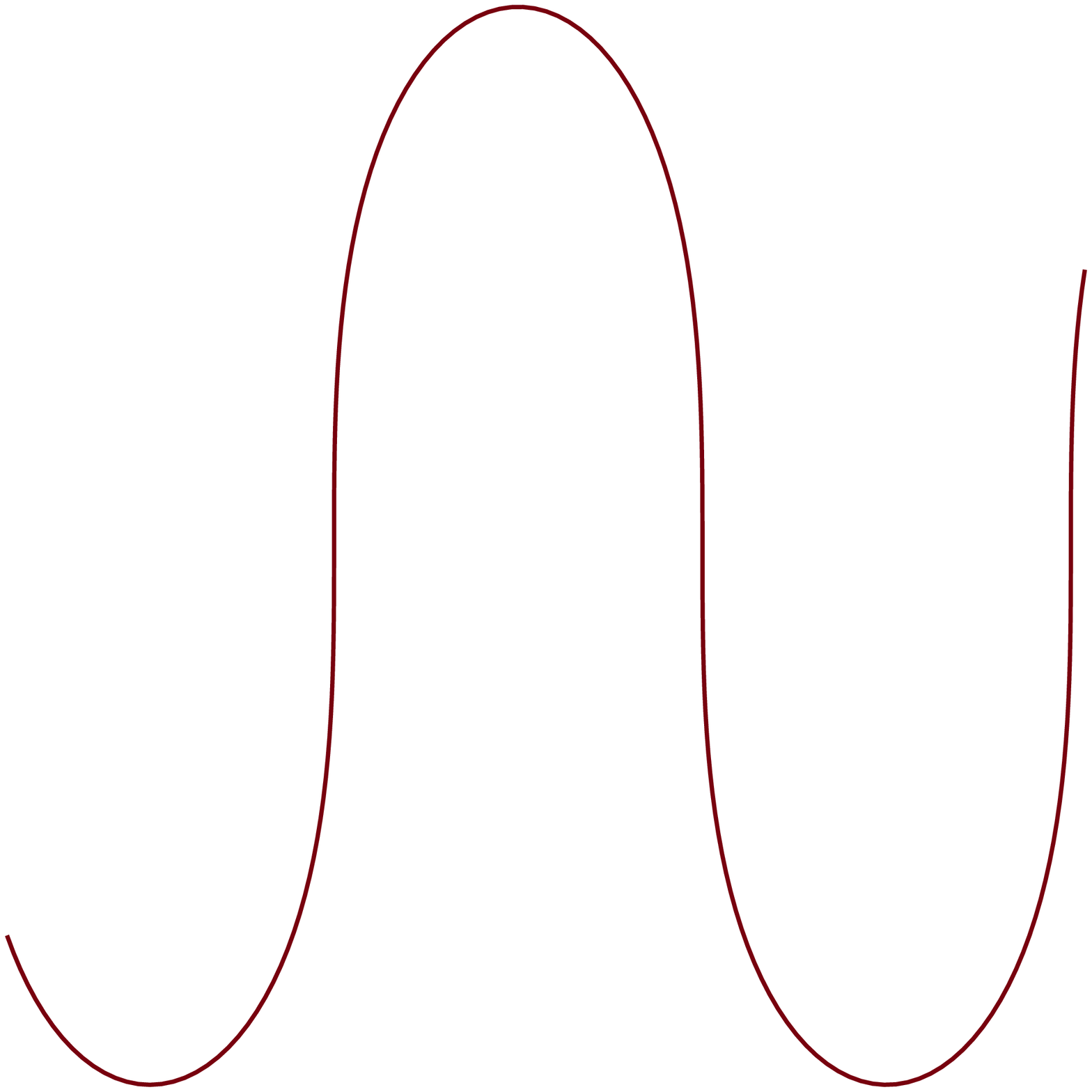}
    }
    \qquad\qquad\qquad
    \subfigure[$\sigma_1=-1,\sigma_2=-1,j=3$
    ]{
      \includegraphics[scale=.3]{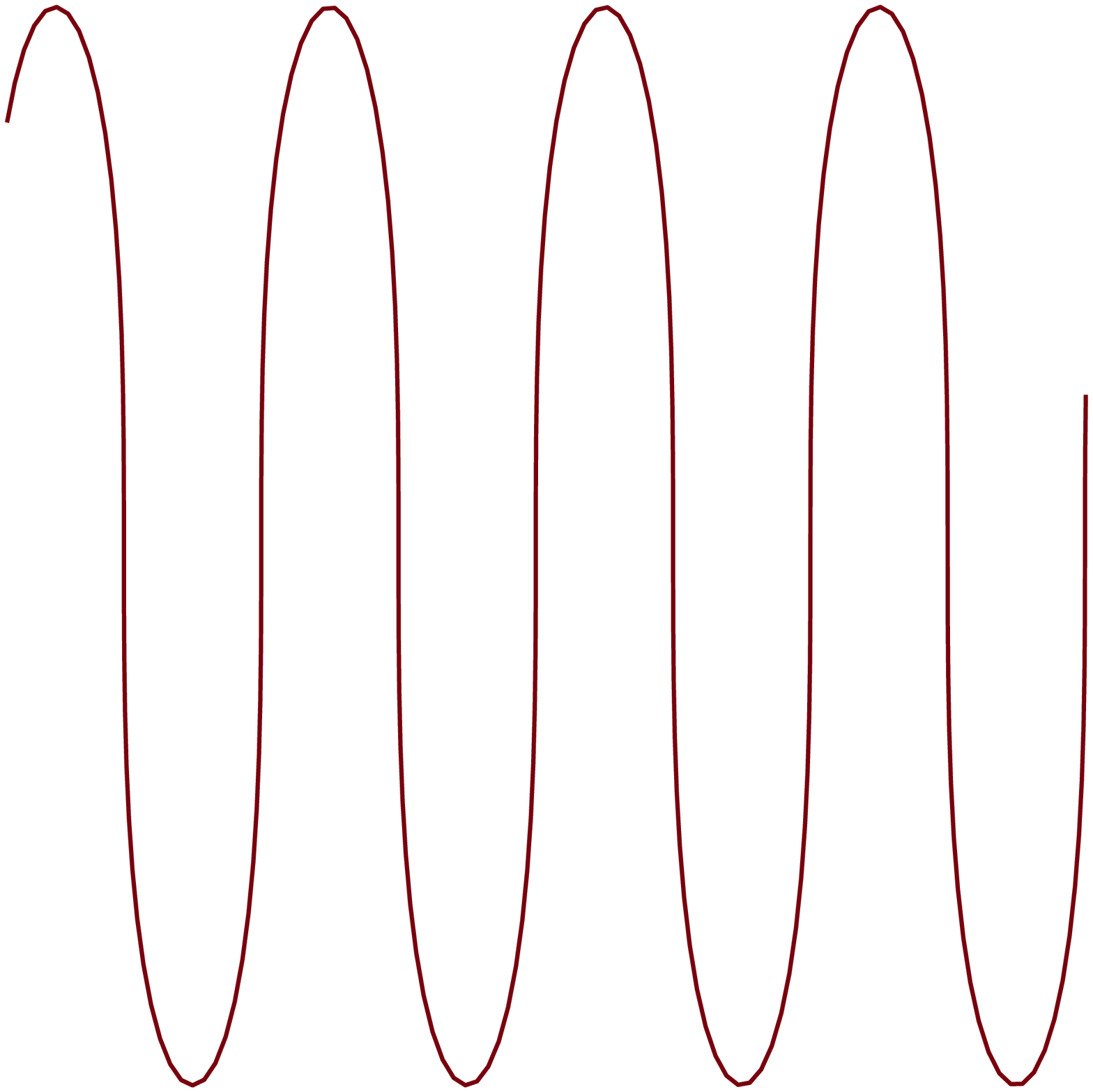}
    }
    \caption{Willmore curves of type $(\sigma_1,\sigma_2,j)$}
  \end{figure}
   
  \medskip 
  
  In order to describe the entire solution set $\mathcal{F}$ of the symmetric Navier problem \eqref{Gl Navier
  BVP symm} we introduce the solution classes $\mathcal{F}_0,\mathcal{F}_1,\ldots$ by setting
  $\mathcal{F}_k:= \mathcal{F}_k^+\cup\mathcal{F}_k^-$ and
   \begin{align} \label{Gl def Fk}
     \begin{aligned}
     \mathcal{F}_0^\pm
     &:= \Big\{ (\gamma,\kappa)\in C^\infty([0,1],\R^2)\times\R: \; \kappa=0,\;
     \gamma \text{ is the straight line through } A,B \\
     &\qquad\qquad\text{or } 0\leq \pm\kappa<\infty,\; \gamma \text{ is
     a } (\mp 1,\pm 1,0)-\text{type solution of \eqref{Gl Navier BVP symm}} \Big\}, \\
     \mathcal{F}_k^\pm 
     &:= \Big\{ (\gamma,\kappa)\in C^\infty([0,1],\R^2)\times\R : \; 0\leq \pm\kappa<\infty \text{ and } 
     \gamma \text{ is a solution of \eqref{Gl Navier BVP symm}} \\
     &\qquad\qquad \text{of type } (\pm 1,\mp 1,k-1) \text{ or }(\mp 1,\pm 1,k) \text{ or } (1,1,k) \text{ or
     }(-1,-1,k)  \Big\}.
     \end{aligned}
   \end{align}  
   In our main result Theorem \ref{Thm Navier kappa1=kappa2} we show amongst other things that every solution
   of the symmetric Navier problem \eqref{Gl Navier BVP symm} belongs to precisely one of these sets, in
   other words we show $\mathcal{F}= \bigcup_{k\in\N_0} \mathcal{F}_k$ where the union of the right hand side
   is disjoint. In this decomposition the sets $\mathcal{F}_1,\mathcal{F}_2,\ldots$ will turn out to be of
   similar ''shape'' whereas $\mathcal{F}_0$ looks different, see figure \ref{Fig 1}.

  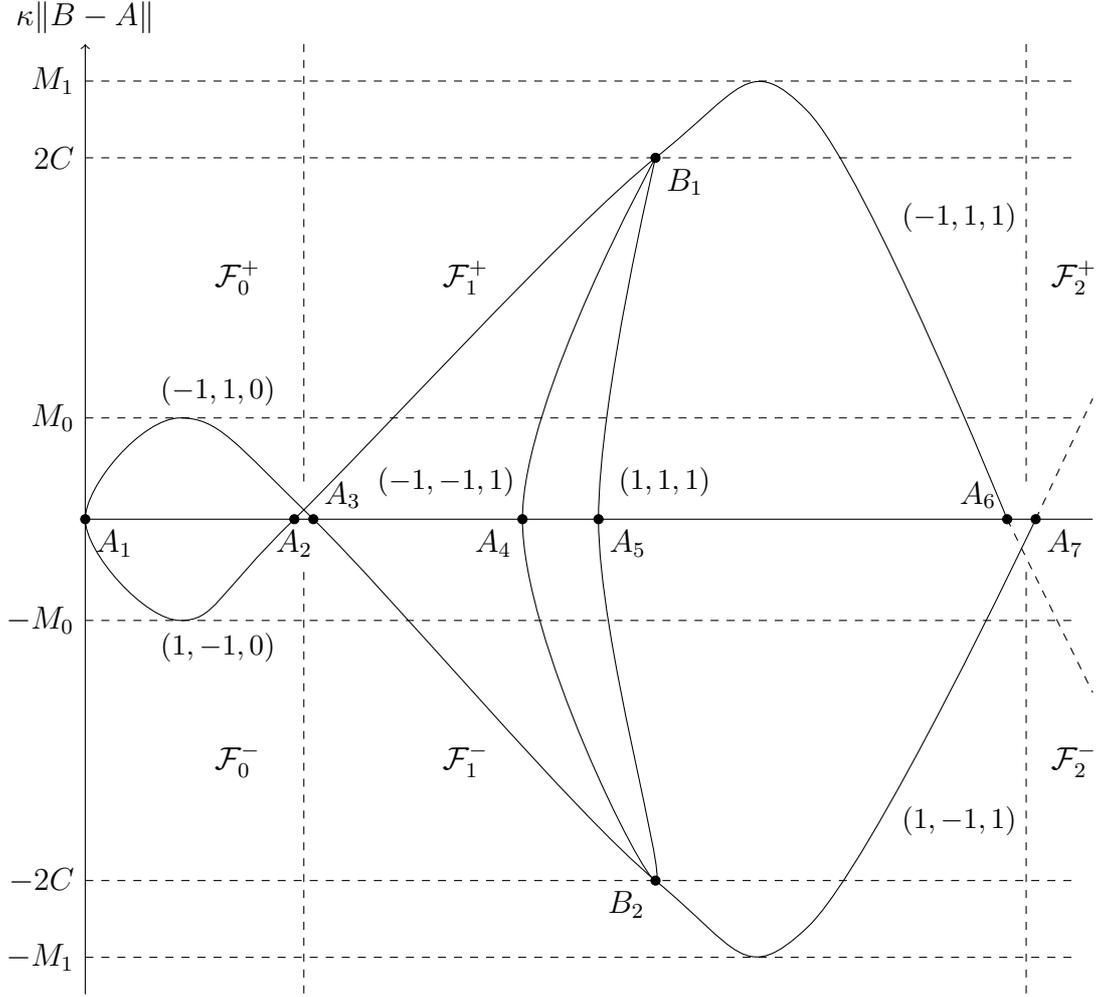
\begin{figure}[htp!] 
    \centering
    \begin{tikzpicture}[yscale=1, xscale=2.5]
	  \draw[-] (0,0) -- (5.3,0);
	  \draw[->] (0,-6.3) -- (0,6.3)  node[above] {$\kappa\|B-A\|$};
	  \draw[dashed] (0,1.343) node[left]{$M_0$} -- (5.2,1.343);
	  \draw[dashed] (0,-1.343) node[left]{$-M_0$} -- (5.2,-1.343);
	  \draw[dashed] (0,4.792) node[left]{$2C$} -- (5.2,4.792);
	  \draw[dashed] (0,-4.792) node[left]{$-2C$} -- (5.2,-4.792);
	  \draw[dashed] (0,5.81) node[left]{$M_1$} -- (5.2,5.81);  	   
	  \draw[dashed] (0,-5.81) node[left]{$-M_1$} -- (5.2,-5.81);
  	  \draw plot[smooth, tension=0.6] coordinates { 
  	  (4.85,0) (3.8,5.41) (3,4.792) (1.1,0) (0.5,-1.343)
  	  (0,0)  (0.5,1.343) (1.2,0) (3,-4.792) (3.8,-5.41) (5,0)};
  	  \draw plot[smooth, tension=0.6] coordinates { 
  	  (3,4.792) (2.3,0) (3,-4.792) (2.7,0) (3,4.792) };
  	  \draw[dashed] (5,0) -- (5.3,1.6);
  	  \draw[dashed] (4.85,0) -- (5.3,-2.3);
  	  \node at (0.7,1.7) {\small{$(-1,1,0)$}}; 
  	  \node at (0.7,-1.7) {\small{$(1,-1,0)$}};
  	  \node at (1.9,0.5) {\small{$(-1,-1,1)$}};
  	  \node at (3.05,0.5) {\small{$(1,1,1)$}};
  	  \node at (4.6,4) {\small{$(-1,1,1)$}};
  	  \node at (4.6,-4) {\small{$(1,-1,1)$}};
  	  \filldraw[black] (0,0) ellipse(1/2.5*0.06cm and 1/1*0.06cm);
  	  \node at (0,0)[below right]{$A_1$};
  	  \filldraw[black] (1.1,0) ellipse(1/2.5*0.06cm and 1/1*0.06cm);
  	  \node at (1.1,0)[below]{$A_2$};
  	  \filldraw[black] (1.2,0) ellipse(1/2.5*0.06cm and 1/1*0.06cm);
  	  \node at (1.2,0)[above right]{$A_3$};
  	  \filldraw[black] (2.3,0) ellipse(1/2.5*0.06cm and 1/1*0.06cm);
  	  \node at (2.3,0)[below left]{$A_4$};
  	  \filldraw[black] (2.7,0) ellipse(1/2.5*0.06cm and 1/1*0.06cm);
  	  \node at (2.7,0)[below right]{$A_5$};
  	  \filldraw[black] (4.85,0) ellipse(1/2.5*0.06cm and 1/1*0.06cm);
  	  \node at (4.85,0)[above left]{$A_6$};
  	  \filldraw[black] (5,0) ellipse(1/2.5*0.06cm and 1/1*0.06cm);
  	  \node at (5,0)[below right]{$A_7$};
  	  \filldraw[black] (3,4.792) ellipse(1/2.5*0.06cm and 1/1*0.06cm);
  	  \node at (3,4.792)[below right]{$B_1$};
  	  \filldraw[black] (3,-4.792) ellipse(1/2.5*0.06cm and 1/1*0.06cm);
  	  \node at (3,-4.792)[below left]{$B_2$};
  	  \draw[dashed] (1.15,0.5) -- (1.15,6.3);
  	  \draw[dashed] (1.15,-0.7) -- (1.15,-6.3);
  	  \draw[dashed] (4.95,0.5) -- (4.95,6.3);
  	  \draw[dashed] (4.95,-0.7) -- (4.95,-6.3);
  	  \node at (0.8,3.2){$\mathcal{F}_0^+$};
  	  \node at (0.8,-3.2){$\mathcal{F}_0^-$};
  	  \node at (2,3.2){$\mathcal{F}_1^+$};
  	  \node at (2,-3.2){$\mathcal{F}_1^-$};
  	  \node at (5.2,3.2){$\mathcal{F}_2^+$};
  	  \node at (5.2,-3.2){$\mathcal{F}_2^-$};
 	\end{tikzpicture}
 	\caption{An illustration of $\mathcal{F}_0$ and $\mathcal{F}_1$}
 	\label{Fig 1}  
  \end{figure}
   
   \newpage
      
   Due to space limitations we only illustrated the union of
   $\mathcal{F}_0=\mathcal{F}_0^+\cup\mathcal{F}_0^-$ and $\mathcal{F}_1=\mathcal{F}_1^+\cup\mathcal{F}_1^-$
   but the dashed branches emanating from $A_6,A_7$ are intended to indicate that
   $\mathcal{F}_1$ connects to the set $\mathcal{F}_2$ which has the same shape as $\mathcal{F}_1$ for
   $(2C,M_1)$ replaced by $(4C,M_2)$ and which in turn connects to $\mathcal{F}_3$ and so on. Here the real numbers $0<M_0<2C<M_1<4C<M_2<\ldots$ are given by 
   $$
     C:=  \int_0^1 \frac{4t^2}{\sqrt{1-t^4}}\,dt,
     \qquad
     M_k := \max_{z\in[0,1]}  2z \Big( kC+ 2\int_z^1 \frac{t^2}{\sqrt{1-t^4}}\,dt \Big). 
   $$
   The labels $(1,-1,0),(-1,1,0)$, etc. in figure \ref{Fig 1} indicate the type of the solution branch which
   is preserved until one of the bifurcation points $B_1,B_2$ or the trivial solution $A_1$ is reached. Let us
   remark that the horizontal line does not represent a parameter but only enables us to show seven different solutions $A_1,\ldots,A_7$
   of \eqref{Gl Navier BVP symm} for $\kappa=0$ as well as their interconnections in
   $C^\infty([0,1],\R^2)\times\R$. The solutions $A_1,A_2,A_3$ belong to
   $\mathcal{F}_0$ whereas $A_4,A_5,A_6,A_7$ belong to $\mathcal{F}_1$. The above picture indicates that the
   solution set $\mathcal{F}$ is pathconnected in any reasonable topology.
   Intending not to overload this paper we do not give a formal proof of this
   statement but only mention that a piecewise parametrization of the solution sets can be realized using the parameter
   $z\in [0,1]$ from the proof of Theorem~\ref{Thm Navier kappa1=kappa2}. Our result leading to
   figure~\ref{Fig 1} reads as follows.
   
  \begin{thm} \label{Thm Navier kappa1=kappa2}
    Let $A,B\in\R^2,A\neq B$ and $\kappa\in\R$. Then the following statements concerning the symmetric Navier
    problem \eqref{Gl Navier BVP symm}  hold true:
    \begin{itemize}
      \item[(i)] There are 
        \begin{itemize}
          \item[(a)] precisely three $\mathcal{F}_0$-solutions of in case $\kappa=0$,
      	  \item[(b)] precisely two $\mathcal{F}_0$-solutions of in case $0<|\kappa|\|B-A\|<M_0$,
      	  \item[(c)] precisely one $\mathcal{F}_0$-solution in case $|\kappa|\|B-A\|=M_0$,
      	  \item[(d)] no $\mathcal{F}_0$-solutions in case $|\kappa|\|B-A\|>M_0$
       \end{itemize}
      \item[] and for all $k\geq 1$ there are 
       \begin{itemize}
         \item[(e)] precisely four $\mathcal{F}_k$-solutions in case $|\kappa|\|B-A\|<2kC$,
         \item[(f)] precisely two $\mathcal{F}_k$-solutions in case $2kC\leq |\kappa|\|B-A\|<M_k$,
         \item[(g)] precisely one $\mathcal{F}_k$-solution in case $|\kappa|\|B-A\|=M_k$,
         \item[(h)] no $\mathcal{F}_k$-solutions in case $|\kappa|\|B-A\|>M_k$.
      \end{itemize}
      \item[(ii)] The minimal energy solutions of \eqref{Gl Navier BVP symm} are
      given as follows: 
      \begin{itemize}
        \item[(a)] In case $\kappa=0$ the Willmore minimizer is the straight line joining $A$ and $B$.
        \item[(b)] In case $2kC<|\kappa|\|B-A\|\leq M_k$ for some $k\in\N_0$ the Willmore minimizer is 
        of type $(-\sign(\kappa),\sign(\kappa),k)$.
        \item[(c)] In case $M_k<|\kappa|\|B-A\|\leq 2(k+1)C$ for some $k\in\N_0$ the Willmore minimizer is the
        unique solution of type $(\sign(\kappa),-\sign(\kappa),k)$.     
  	  \end{itemize}	    	  
  	  \item[(iii)] All $(\sigma_1,\sigma_2,j)$-type solutions with $\sigma_1=-\sigma_2$ are axially symmetric.
  	  The $(\sigma_1,\sigma_2,j)$-type solutions with $\sigma_1=\sigma_2$ are not axially symmetric and
  	  pointwise symmetric only in case $\kappa=0$.
    \end{itemize}
  \end{thm}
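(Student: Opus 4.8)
The plan is to exploit the explicit description \eqref{Gl kappa_ab} of the curvature of a Willmore curve in order to turn the geometric boundary value problem into a system of scalar equations in the three parameters $a>0$, $b\in[-T/2,T/2)$ and $L>0$. Since the Willmore functional is invariant under rigid motions and scales like $W(\lambda\gamma)=\lambda^{-1}W(\gamma)$, I would first normalise $\|B-A\|=1$, which is precisely why in the end everything depends only on the product $\kappa\|B-A\|$. Reconstructing the arc-length parametrisation $\hat\gamma$ from its curvature via the tangent angle $\phi(s)=\phi_0+\int_0^s\kappa_{a,b}$ and $\hat\gamma(s)=\hat\gamma(0)+\int_0^s(\cos\phi,\sin\phi)$, the conditions \eqref{Gl Navier BVP symm} translate into (I) the two curvature equations $\sqrt2\,a\cn(b)=\kappa=\sqrt2\,a\cn(aL+b)$ and (II) the chord-length equation $\big|\int_0^L e^{i\phi(s)}\dx\big|=1$, where the prescribed direction of $B-A$ is matched for free by choosing the rotation angle $\phi_0$.

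Next I would analyse the curvature equations. Since $\cn$ is even and $T$-periodic, $\cn(b)=\cn(aL+b)$ holds if and only if either $aL\in T\Z$ (Case A) or $aL+2b\in T\Z$ (Case B). I expect Case A to correspond exactly to the solutions with $\sigma_1=\sigma_2$ and Case B to those with $\sigma_1=-\sigma_2$, which accounts for the four type patterns appearing in \eqref{Gl def Fk}. In each case the two parameters left after imposing the curvature equation can be packaged into a single normalised amplitude $z\in[0,1]$ (essentially $\kappa$ relative to the maximal curvature $\sqrt2\,a$), and the chord-length equation (II) becomes a one-dimensional equation $g_k(z)=|\kappa|\,\|B-A\|$ after undoing the normalisation. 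Evaluating the chord integral over a half-period and substituting $t=\cn$ (resp.\ $t=\sn$) should produce exactly the elliptic integrals defining $C$ and $M_k$; in particular $M_k=\max_{z\in[0,1]}g_k(z)$, while the threshold $2kC$ is the value of $g_k$ at the endpoint of its monotone range. Counting preimages of $g_k$ then yields the cases (a)--(h) of part (i).

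For the minimal energy statement (ii) I would compute $W(\gamma)=a^2\int_0^L\cn^2(as+b)\dx=a\int_{b}^{aL+b}\cn^2(u)\,du$ along each admissible branch and compare these values as functions of $z$ on the subintervals delimited by the thresholds $2kC$ and $M_k$. The comparison should show that at a fixed value of $|\kappa|\|B-A\|$ the global minimiser is always one of the two axially symmetric (Case B) branches, and tracking which of the sign patterns $(\mp\sign\kappa,\pm\sign\kappa,k)$ is energetically favourable on each subinterval gives (b) and (c), while $\kappa=0$ trivially forces the straight line in (a).

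Finally, part (iii) follows from a clean symmetry criterion for the curvature. Differentiating the defining relations shows that $\gamma$ is axially symmetric if and only if $\kappa_\gamma(s)=\kappa_\gamma(L-s)$ and pointwise symmetric if and only if $\kappa_\gamma(s)=-\kappa_\gamma(L-s)$. Inserting \eqref{Gl kappa_ab} and using $\cn(u+T/2)=-\cn(u)$, these become $aL+2b\in T\Z$ and $aL+2b\in T\Z+T/2$ respectively. Hence every Case~B solution, i.e.\ every $(\sigma_1,\sigma_2,j)$-type solution with $\sigma_1=-\sigma_2$, is axially symmetric; a Case~A solution ($aL\in T\Z$, so $\sigma_1=\sigma_2$) can be axially symmetric only on the degenerate overlap $2b\in T\Z$ and is pointwise symmetric only when $2b\in T\Z+T/2$, i.e.\ $\cn(b)=0$, which by (I) means $\kappa=0$. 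I expect the main obstacle to be the second paragraph: proving that the chord-length functions $g_k$ have exactly the claimed unimodal shape (monotone up to a single maximum $M_k$) and that the type labels are preserved along each branch, since this is what pins down the exact solution counts and the global geometry of $\mathcal F$ in Figure~\ref{Fig 1}.
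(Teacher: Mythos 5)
Your proposal follows essentially the same route as the paper: reduce to the explicit family $\kappa_{a,b}$, convert the boundary conditions into a one-dimensional equation in the normalised amplitude $z=\kappa/(\sqrt{2}a)$ whose right-hand side is unimodal with maximum $M_k$ and endpoint value $2kC$ (the paper verifies the shape by strict concavity), minimise energy by minimising $a$ since $W=a^2\|B-A\|$, and prove (iii) via the criterion that axial (resp.\ pointwise) symmetry of the curve is equivalent to $aL+2b$ being an even (resp.\ odd) multiple of $T/2$ — your Case~A/Case~B dichotomy is exactly the paper's $\sigma_1=\sigma_2$ versus $\sigma_1=-\sigma_2$ split. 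The plan is correct and the one obstacle you flag (unimodality of the chord-length functions) is handled in the paper by an elementary concavity computation.
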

  
  We would like to point out that we can write down almost explicit formulas for the solutions obtained in
  Theorem \ref{Thm Navier kappa1=kappa2} using the formulas from our general result which we will provide in
  Theorem~\ref{Thm Navier BVP the general case} and the subsequent Remark~\ref{Bem allgemeines Resultat}~(b).
  Futhermore let us notice that the proof of Theorem \ref{Thm Navier kappa1=kappa2}~(ii)~(b) reveals which of
  the two $(-\sign(\kappa),\sign(\kappa),k)$-type solutions is the Willmore minimizer. We did not provide this
  piece of information in the above statement in order not to anticipate the required notation.
  At this stage let us only remark that in figure \ref{Fig 1} the Willmore minimizers from Theorem \ref{Thm
  Navier kappa1=kappa2}~(ii)~(b) lie on the left parts of the branches consisting of
  $(-\sign(\kappa),\sign(\kappa),0)$-type respectively $(-\sign(\kappa),\sign(\kappa),1)$-type solutions.

  \medskip
  
  \begin{bem} \label{Bem 1}~
    \begin{itemize}
      \item[(a)] The numerical value for $M_0$ is $1.34380$ which is precisely the threshold value
      found in Theorem~1 in~\cite{DecGru_Boundary_value_problems}. The existence result for graph-type
      solutions from \cite{DecGru_Boundary_value_problems} are therefore reproduced by Theorem \ref{Thm
      Navier kappa1=kappa2} (i)(b)-(d). Notice that a $(\sigma_1,\sigma_2,j)$-type solution with $j\geq 1$ can
      not be a graph with respect to any coordinate frame. On the other hand, in (i)(a) we found three
      $\mathcal{F}_0$-solutions for $\kappa=0$ which are the straight line and two nontrivial Willmore curves of type $(1,-1,0)$ respectively
      $(-1,1,0)$. The latter two solutions were not discovered by Deckelnick and Grunau because they are
      continuous but non-smooth graphs since the slopes (calculated with respect to the straight line
      joining the end points of the curve) at both end points are $+\infty$ or $-\infty$.
      Viewed as a curve, however, these solutions are smooth.
	  \item[(b)] From Theorem \ref{Thm Navier kappa1=kappa2} (i) one deduces that for all
	  $\kappa$ and all $A\neq B$ there is an infinite sequence of Willmore curves
	  solving \eqref{Gl Navier BVP symm} whose Willmore energies tends to $+\infty$. Here one uses that the Willmore energies of $\mathcal{F}_k-$solutions
	  tend to $+\infty$ as $k\to\infty$. We will see in Theorem~\ref{Thm Navier kappa1!=kappa2} that this result
	  remains true in the nonsymmetric case.
      \item[(c)] Using the formula for the length of a Willmore curve from our general result
      Theorem~\ref{Thm Navier BVP the general case} one can show that for any given $A,B\in\R^2,\kappa\in\R$
      satisfying $A\neq B,\kappa\neq 0$ the solution of \eqref{Gl Navier BVP symm} having minimal length $L$
      must be of type $(-\sign(\kappa),\sign(\kappa),j)$ for some $j\in\N_0$. More precisely, defining $L^*:=
      T\|B-A\|/(\sqrt{2}C)$ for the periodicity $T$ of $\cn$ (see section \ref{sec prelim}) one can show
      $L>L^*,L=L^*,L<L^*$ respectively if the solution curve is of type $(\sigma_1,\sigma_2,j)$ with
      $\sigma_1=-\sigma_2=\sign(\kappa), \sigma_1=\sigma_2,\sigma_1=-\sigma_2=-\sign(\kappa)$.
      Notice that in the case  $\|B-A\|=1$ studied by Linn{\'e}r we have $L^*\approx 2.18844$ which is
      precisely the value on page 460 in Linn{\'e}r's paper \cite{Lin_Explicit_elastic}.      
      Numerical plots indicate that in most cases the Willmore curves of minimal length have to be
      of type $(-\sign(\kappa),\sign(\kappa),j)$ where $j$ is smallest possible which, however, is not
      true in general as the following numerical example shows.
      In the special case $\|B-A\|=1,\kappa=9.885$ there is a $(-1,1,2)$-type solution 
      generated by $a\approx 7.48526$ having length $L_1\approx 2.08043$ and a $(-1,1,3)$-type
      solution generated by $a\approx 11.65140$ which has length $L_2\approx 2.08018$ and we observe
      $L_2<L_1$.
      \item[(d)] Having a look at figure \ref{Fig 1} we find that the set of Willmore minimizers described
      in  Theorem~\ref{Thm Navier kappa1=kappa2}~(ii) is not connected as a whole, but it is path-connected
      within suitable intervals of the parameter $\kappa\|B-A\|$.
    \end{itemize}
  \end{bem}
  
  In the nonsymmetric case $\kappa_1\neq \kappa_2$ it seems to be difficult to identify nicely
  behaved connected sets which constitute the whole solution set. However, we find existence and
  multiplicity results in the spirit of Remark \ref{Bem 1} (b). We can prove the following result.

  \begin{thm} \label{Thm Navier kappa1!=kappa2}
    Let $A,B\in\R^2, \kappa_1,\kappa_2\in\R$ satisfy $A\neq B,\kappa_1\neq \kappa_2$ and let
    $\sigma_1,\sigma_2\in\{-1,+1\}$. Then there is a smallest natural number $j_0$ such that  
    for all $j\geq j_0$ the Navier problem \eqref{Gl Navier BVP} has at least one
    $(\sigma_1,\sigma_2,j)$-type solution $\gamma_j$ such that $W(\gamma_{j_0}) < W(\gamma_{j_0+1}) < \ldots
    < W(\gamma_j) \to +\infty$ as $j\to\infty$. Moreover, we have $j_0\leq j_0^*$ where $j_0^*$
    is the smallest natural number satisfying 
    \begin{equation*} 
      \|B-A\|^2 
      \leq \begin{cases}       
      \frac{4C^2(j_0^*-\frac{1}{2})^2}{\max\{\kappa_1^2,\kappa_2^2\}}
      + \frac{4(\kappa_2-\kappa_1)^2}{\max\{\kappa_1^4,\kappa_2^4\}} &,\text{if }\sigma_1=\sigma_2\\
      \frac{4C^2(j_0^*-1)^2}{\max\{\kappa_1^2,\kappa_2^2\}}
      + \frac{4(\kappa_2-\kappa_1)^2}{\max\{\kappa_1^4,\kappa_2^4\}}&,\text{if } \sigma_1=-\sigma_2.
      \end{cases}    
    \end{equation*}
  \end{thm}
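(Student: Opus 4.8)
The plan is to convert the boundary value problem into a single scalar equation in the scaling parameter $a$ of the curvature \eqref{Gl kappa_ab}, and to solve it by the intermediate value theorem with $\sigma_1,\sigma_2$ fixed and $j$ running. First I would record the reduction. By \eqref{Gl kappa_ab} a $(\sigma_1,\sigma_2,j)$-type solution of \eqref{Gl Navier BVP} is, up to a rigid motion, determined by a triple $(a,b,L)$ whose arc-length parametrization has curvature $\kappa_{a,b}$. The boundary curvatures force $\sqrt2\,a\,\cn(b)=\kappa_1$ and $\sqrt2\,a\,\cn(\phi)=\kappa_2$ with $\phi:=b+aL$, which requires $a>a_{\min}:=\max\{|\kappa_1|,|\kappa_2|\}/\sqrt2$ and, together with Definition~\ref{Def sigma1sigma2j}, determines $b=b(a)$ and $\phi=\phi(a)$ uniquely once the type is fixed (the sign conditions select the branch of the inverse of $\cn$, the index $j$ the period). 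The conditions $\gamma(0)=A,\gamma(1)=B$ then only constrain $\|B-A\|$ after a rotation: writing the tangent angle as $\theta(s)=\mathrm{const}+2\arcsin\!\big(\tfrac1{\sqrt2}\sn(as+b)\big)$ and using $\cos=\cn^2$, $\sin=\sqrt2\,\sn\dn$, the substitution $u=as+b$ and the telescoping identity $\int\sqrt2\,\sn\dn=-\sqrt2\,\cn$ give
\begin{equation*}
  \|B-A\|^2 = \frac{1}{a^2}\Big(\int_b^{\phi}\cn^2(u)\,du\Big)^2 + \frac{(\kappa_1-\kappa_2)^2}{a^4}=:F_j(a),
\end{equation*}
so that existence of a $(\sigma_1,\sigma_2,j)$-type solution is equivalent to solving $F_j(a)=\|B-A\|^2$ on $(a_{\min},\infty)$.

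Next I would study $F_j$. Setting $I(a):=\int_{b(a)}^{\phi(a)}\cn^2$ and using $\int_0^T\cn^2=\sqrt2\,C$ (which follows from the substitution $t=\cn$ and the definition of $C$) together with $aL=\phi-b\in[jT,(j+1)T)$, one has $I(a)=j\sqrt2\,C+\rho(a)$ with $\rho(a)\in[0,\sqrt2\,C)$, whence $F_j$ is continuous with $F_j(a)\to0$ as $a\to\infty$. Since $I(a)\ge j\sqrt2\,C$, the boundary value $\lim_{a\to a_{\min}^+}F_j(a)$ grows like $j^2$ and thus exceeds $\|B-A\|^2$ for all large $j$, and the intermediate value theorem produces a zero $a_j$ of $F_j(\cdot)-\|B-A\|^2$ and with it $\gamma_j$. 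To obtain the explicit threshold I would make this quantitative, using $a_{\min}^2=\max\{\kappa_1^2,\kappa_2^2\}/2$ and a lower bound $I_{\min}$ for $I(a)$ as $a\downarrow a_{\min}$: existence is guaranteed as soon as
\begin{equation*}
  \|B-A\|^2 \le \frac{2\,I_{\min}^2}{\max\{\kappa_1^2,\kappa_2^2\}}+\frac{4(\kappa_1-\kappa_2)^2}{\max\{\kappa_1^4,\kappa_2^4\}}.
\end{equation*}
The main obstacle is this last estimate: at $a=a_{\min}$ one of the phases $b,\phi$ sits at an extremum of $\cn$ while the other is generic, and one must track, in all four sign patterns and for both orderings $|\kappa_1|\ge|\kappa_2|$ and $|\kappa_1|<|\kappa_2|$, how many quarter-periods of $\cn^2$ survive in $\int_b^{\phi}\cn^2$. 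Carrying this out shows that $I_{\min}=(j-\tfrac12)\sqrt2\,C$ is safe when $\sigma_1=\sigma_2$ and $I_{\min}=(j-1)\sqrt2\,C$ when $\sigma_1=-\sigma_2$, which reproduces exactly the two cases defining $j_0^*$ and hence $j_0\le j_0^*$.

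Finally I would settle the energy statement. From $\kappa_{a,b}^2=2a^2\cn^2(as+b)$ one gets $W(\gamma)=a\,I(a)$, and combining this with the endpoint relation yields the clean identity
\begin{equation*}
  \|B-A\|^2\,a^4 = \big(a\,I(a)\big)^2+(\kappa_1-\kappa_2)^2 = W(\gamma)^2+(\kappa_1-\kappa_2)^2 .
\end{equation*}
Solving for $a^2$ shows that $a$, and therefore $W=aI$, is a strictly increasing function of $I$ alone, the data $\|B-A\|$ and $\kappa_1-\kappa_2$ being fixed. Since $I_j=j\sqrt2\,C+\rho(a_j)$ with $\rho\in(0,\sqrt2\,C)$, consecutive indices satisfy $I_{j+1}-I_j>\sqrt2\,C-\sqrt2\,C=0$ no matter which solution $a_j$ is selected, so $I_j$ is strictly increasing and hence $W(\gamma_{j_0})<W(\gamma_{j_0+1})<\cdots$. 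As $I_j\ge (j-1)\sqrt2\,C$ and $a_j\ge a_{\min}>0$ give $W(\gamma_j)\ge a_{\min}(j-1)\sqrt2\,C\to\infty$, the energies diverge, which completes the argument.
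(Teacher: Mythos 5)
Your proposal is correct and follows essentially the same route as the paper: reduce to the scalar equation $\|B-A\|^2=\tfrac{2}{a^2}f_{\sigma_1,\sigma_2,j}(\kappa_1,\kappa_2,a)^2+\tfrac{(\kappa_2-\kappa_1)^2}{a^4}$ from Theorem~\ref{Thm Navier BVP the general case}, apply the intermediate value theorem on $[a_0,\infty)$, and obtain $j_0^*$ from the lower bounds $f\geq (j-\tfrac12)C$ (resp. $(j-1)C$) at $a=a_0$. The only genuine deviation is the energy monotonicity: you deduce it from $I_{j+1}>I_j$ (via $I_j=j\sqrt2\,C+\rho_j$ with $\rho_j\in[0,\sqrt2\,C)$) together with the fact that the constraint makes $W$ an increasing function of $I$, whereas the paper shows directly that every admissible $a_l$ with $l>k$ exceeds every $a_k$ and then uses that $W=(a^4\|B-A\|^2-(\kappa_2-\kappa_1)^2)^{1/2}$ is increasing in $a$ --- both arguments are valid and of comparable length.
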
   
  
  The proof of both theorems will be achieved by using an explicit expression for Willmore curves like the
  ones used in Linn{\'e}r's work \cite{Lin_Explicit_elastic} and Singer's lectures \cite{Sin_Lectures}. In
  Proposition \ref{Prop Navier} we show that the set of Willmore curves emanating from $A\in\R^2$ is
  generated by four real parameters so that solving the Navier problem is reduced to adjusting these
  parameters in such a way that the four equations given by the boundary conditions are satisfied. The proofs
  of Theorem \ref{Thm Navier kappa1=kappa2} and Theorem \ref{Thm Navier kappa1!=kappa2} will therefore be
  based on explicit computations which even allow to write down the complete solution theory for the general
  Navier problem \eqref{Gl Navier BVP}, see Theorem \ref{Thm Navier BVP the general case}.
  Basically the same strategy can be pursued to solve the general Dirichlet problem~\eqref{Gl Dirichlet
  BVP}. In view of the fact that a discussion of this boundary value problem does not reveal any new
  methodological aspect we decided to defer it to Appendix~B. Notice that even in the presumably simplest
  special cases $\theta_2-\theta_1\in\{-\pi/2,0,\pi/2\}$ it requires non-negligible computational effort to
  prove optimal existence and multiplicity results for \eqref{Gl Dirichlet BVP} like the ones in
  Theorem~\ref{Thm Navier kappa1=kappa2}. We want to stress that it would be desirable to prove the
  existence of infinitely many solutions of the Navier problem and the Dirichlet problem also in
  higher dimensions where Willmore curves $\gamma\in C^\infty([0,1],\R^n)$ with $n\geq 3$ are considered. It
  would be very interesting to compare such a solution theory to the corresponding theory for Willmore
  surfaces established by Sch\"atzle \cite{Sch_The_Willmore_boundary} where the existence of one solution of the
  corresponding Dirichlet problem is proved.  
  
  \medskip
  
  Let us finally outline how this paper is organized. In section \ref{sec prelim} we review some preliminary
  facts about Willmore curves and Jacobi's elliptic functions appearing in the definition of $\kappa_{a,b}$,
  see \eqref{Gl kappa_ab}. Next, in section \ref{sec Navier}, we provide all information concerning the Navier
  problem starting with a derivation of the complete solution theory for the general Navier problem \eqref{Gl
  Navier BVP} in subsection \ref{subsec general result}. In the subsections \ref{subsec Navier symmertric}
  and \ref{subsec Navier nonsymmertric} this result will be applied to the special cases $\kappa_1=\kappa_2$
  respectively $\kappa_1\neq \kappa_2$ in order to prove Theorem~\ref{Thm Navier kappa1=kappa2} and Theorem
  \ref{Thm Navier kappa1!=kappa2}. In Appendix~A we provide the proof of one auxiliary result (Proposition
  \ref{Prop Navier}) and Appendix~B is devoted to the solution theory for the general Dirichlet problem.

  \section{Preliminaries} \label{sec prelim}
   
  The Willmore functional is invariant under reparametrization so that the arc-length
  parametri\-zation $\hat\gamma$ of a given Willmore curve $\gamma$ is also a Willmore curve. It is known that
  the curvature function $\hat\kappa$ of $\hat\gamma$ satisfies the ordinary differential equation
  \begin{equation} \label{Gl Willmore equation}
    - \hat\kappa''(s) = \frac{1}{2}\hat\kappa(s)^3,
  \end{equation}  
  in particular it is not restrictive to consider smooth curves only. A derivation of \eqref{Gl Willmore
  equation} may be found in Singer's lectures \cite{Sin_Lectures}. Moreover, we
  will use the fact that every regular planar curve $\gamma\in C^\infty([0,1],\R^2)$ admits the
  representation
   \begin{align} \label{Gl ansatz gamma allgemein}
    \begin{aligned}
      \gamma\sim \hat\gamma,\qquad
      \hat\gamma(t) 
      =  A+ Q\int_0^t \vecII{\cos(\int_0^s \hat\kappa(r)\,dr)}{\sin(\int_0^s\hat\kappa(r)\,dr)} \,ds 
      \qquad (t\in [0,L]) 
    \end{aligned}
  \end{align}
  where $A$ is a point in $\R^2$ and $Q\in SO(2)$ is a rotation matrix. Here we used the symbol
  $\gamma\sim\hat\gamma$ in order to indicate that $\gamma,\hat\gamma$ can be transformed into each other by
  some diffeomorphic reparametrization. In Proposition \ref{Prop Navier} we will show that a regular curve
  $\gamma$ is a Willmore curve if and only if the function $\hat\kappa$ in
  \eqref{Gl ansatz gamma allgemein} is given by $\hat\kappa=\kappa_{a,b}$ for $\kappa_{a,b}$ as in \eqref{Gl
  kappa_ab} with ${a\geq 0},{L>0},{-T/2\leq b<T/2}$. In such a situation we will say that the
  parameters $a,b,L$ are admissible. Since this fact is fundamental for the proofs of our results we mention
  some basic properties of Jacobi's elliptic functions $\sn,\cn,\dn$. Each of these functions maps $\R$ into
  the interval $[-1,1]$ and is $T$-periodic where $T:= 4\int_0^1 (\tfrac{2}{1-t^4})^{1/2}\,dt \approx
  7.41630$. We will use the identities $\cn^4+2\sn^2\dn^2=1$ and $\cn'=-\sn\dn$ on $\R$ as well as
  $\arccos(\cn^2)'=\sqrt{2}\cn$ on $[0,T/2]$.
  Moreover we will exploit the fact that $\cn$ is $T/2$-antiperiodic, i.e. $\cn(x+T/2)=-\cn(x)$ for all
  $x\in\R$, and that $\cn$ is decreasing on $[0,T/2]$ with inverse function $\cn^{-1}$ satisfying
  $\cn^{-1}(1)=0$ and $(\cn^{-1})'(t)=-(\tfrac{2}{1-t^4})^{1/2}$ for $t\in (-1,1)$.

\section{The Navier problem} \label{sec Navier}

  \subsection{The general result} \label{subsec general result}
  
  The Navier boundary value problem \eqref{Gl Navier BVP} does not appear to be easily
  solvable since the  curvature function $\det(\gamma',\gamma'')|\gamma'|^{-3}$ of a given
  regular smooth curve $\gamma$ does in general not solve a reasonable differential equation.
  If, however, we parametrize $\gamma$ by arc-length we obtain a smooth regular curve
  $\hat\gamma:[0,L]\to\R^2$ whose  curvature function $\hat\kappa$ satisfies the differential equation
  $-2\hat\kappa''=\hat\kappa^3$ on $[0,L]$ which is explicitly solvable. Indeed, in Proposition~\ref{Prop
  Navier} we will show that the nontrivial solutions of this differential equation are given by the functions
  $\kappa_{a,b}$ from \eqref{Gl kappa_ab} so that the general form of a Willmore curve $\gamma$ is given by
  \begin{align} \label{Gl ansatz gamma}
    \begin{aligned}
      \gamma\sim\hat\gamma,\qquad
      \hat\gamma(t) 
      &=  A+ Q\int_0^t \vecII{\cos(\int_0^s
      \kappa_{a,b}(r)\,dr)}{\sin(\int_0^s\kappa_{a,b}(r)\,dr)} \,ds
      \qquad \text{for all }t\in [0,L]
    \end{aligned}
  \end{align} 
  where $Q\in SO(2)$ is a rotation matrix and $L>0$ denotes the length of $\gamma$.
  In such a situation we will say that the Willmore curve $\gamma$ is generated by $a,b,L$.  
  As a consequence, a Willmore curve $\gamma$ solves the Navier boundary value problem \eqref{Gl Navier BVP}
  once we determine the parameters $a,b,L,Q$ such that the equations
  \begin{align} \label{Gl Navier BC}
    \hat\gamma(L)-\hat\gamma(0)=B-A,\qquad \kappa_{a,b}(0)=\kappa_1,\qquad \kappa_{a,b}(L)=\kappa_2,
  \end{align}
  are satisfied. Summarizing the above statements and taking into account Definition \ref{Def sigma1sigma2j}
  we obtain the following preliminary result the proof of which we defer to Appendix A.
  
  \begin{prop} \label{Prop Navier}
    Let $\gamma\in C^\infty([0,1],\R^2)$ be a regular curve. Then the following is true:
    \begin{itemize}
      \item[(i)] The curve $\gamma$ solves the Navier problem \eqref{Gl Navier BVP} if and only if $\gamma$
      is given by \eqref{Gl ansatz gamma} for admissible parameters $a,b,L$ and $Q\in
      SO(2)$ satisfying \eqref{Gl Navier BC}.
      \item[(ii)] Let $\sigma_1,\sigma_2\in\{-1,+1\},j\in\N_0$ be given and assume $\gamma$ satisfies
      \eqref{Gl ansatz gamma}. Then $\gamma$ is of type $(\sigma_1,\sigma_2,j)$ if and only if there is
      some $\tilde b\in [-T/2,T/2)$ such that
      \begin{align}\label{Gl b+aL}
        \begin{aligned}
          b+aL = (j+\lceil(b-\tilde b)/T \rceil) T + \tilde b,\qquad   
          \sign(b)=\sigma_1, \qquad
          \sign(\tilde b)=\sigma_2.
        \end{aligned}
      \end{align}
    \end{itemize}
  \end{prop}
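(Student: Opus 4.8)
The plan is to handle the two parts separately, with part (i) resting on the explicit solution of the curvature ODE and part (ii) being an exercise in modular bookkeeping. For part (i) I would first invoke the reparametrization invariance recorded in Section~\ref{sec prelim}: $\gamma$ is a Willmore curve if and only if its arc-length parametrization $\hat\gamma$ is, and the curvature $\hat\kappa$ of $\hat\gamma$ is then characterized by the Euler--Lagrange equation \eqref{Gl Willmore equation}, i.e. $-\hat\kappa''=\tfrac12\hat\kappa^3$ on $[0,L]$. Thus the whole of (i) reduces to showing that the nontrivial solutions of this ODE are exactly the functions $\kappa_{a,b}$ of \eqref{Gl kappa_ab}; once this is established, the representation \eqref{Gl ansatz gamma allgemein}, which holds for every regular planar curve and expresses it through its curvature up to a rigid motion $(A,Q)$, immediately yields the ansatz \eqref{Gl ansatz gamma}. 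The boundary data then translate verbatim: since the curvature is invariant under the rotation $Q$, the conditions $\gamma(0)=A,\gamma(1)=B$ become $\hat\gamma(L)-\hat\gamma(0)=B-A$, while $\kappa_\gamma(0)=\kappa_1,\kappa_\gamma(1)=\kappa_2$ become $\kappa_{a,b}(0)=\kappa_1,\kappa_{a,b}(L)=\kappa_2$, which is precisely \eqref{Gl Navier BC}.

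The computational core of (i) is the ODE analysis. Multiplying $-\hat\kappa''=\tfrac12\hat\kappa^3$ by $\hat\kappa'$ and integrating produces the first integral $(\hat\kappa')^2+\tfrac14\hat\kappa^4=E$ with a conserved energy $E\geq 0$, where $E=0$ forces the trivial solution $\hat\kappa\equiv 0$. For $E>0$ I would set $a:=E^{1/4}>0$ and verify, using the identities $\cn'=-\sn\dn$ and $\cn^4+2\sn^2\dn^2=1$ from Section~\ref{sec prelim}, that $\kappa_{a,b}(s)=\sqrt2\,a\,\cn(as+b)$ satisfies the same first integral with energy $a^4$ and hence, after differentiation, the second-order equation. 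Completeness then follows from uniqueness for the initial value problem: given admissible initial data $(\hat\kappa(0),\hat\kappa'(0))$ of energy $a^4$, the resulting bound $|\hat\kappa(0)|\leq\sqrt2\,a$ lets me solve $\cn(b)=\hat\kappa(0)/(\sqrt2\,a)$ for a unique $b\in[-T/2,T/2)$ whose sign matches that of $\hat\kappa'(0)$ through $\cn'$, so that $\kappa_{a,b}$ realizes these data and must coincide with $\hat\kappa$. This argument also explains why the stated ranges $a\geq 0$ and $b\in[-T/2,T/2)$ (a fundamental domain for the $T$-periodic $\cn$) are exactly the admissible ones.

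For part (ii) I would argue purely arithmetically. Writing the terminal phase in reduced form $b+aL=mT+\tilde b$ with a unique $\tilde b\in[-T/2,T/2)$ and $m\in\N_0$, the endpoint sign condition from Definition~\ref{Def sigma1sigma2j} is by construction the condition $\sign(\tilde b)=\sigma_2$, whereas $\sign(b)=\sigma_1$ is left untouched. It then remains to identify the integer $m$ with $j+\lceil(b-\tilde b)/T\rceil$. Since $b,\tilde b\in[-T/2,T/2)$ we have $(b-\tilde b)/T\in(-1,1)$, so the ceiling equals $0$ when $b\leq\tilde b$ and $1$ when $b>\tilde b$; feeding $aL=mT+(\tilde b-b)$ into the period count $jT\leq aL<(j+1)T$ shows precisely that $m=j$ in the first case and $m=j+1$ in the second, which is exactly \eqref{Gl b+aL}. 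Running the implications in reverse yields the converse.

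The hard part will be the completeness step in the ODE classification, namely arguing that the two-parameter family $\kappa_{a,b}$ \emph{exhausts} all nontrivial solutions rather than merely solving the equation; this is where uniqueness for the initial value problem must be combined carefully with the surjectivity of $b\mapsto(\cn(b),\sign(\cn'(b)))$ onto the admissible initial data. By contrast, the bookkeeping in (ii) is elementary once the reduction $b+aL=mT+\tilde b$ is set up, and the translation of the boundary data in (i) is immediate given the rotation-invariance of the curvature.
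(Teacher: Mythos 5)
Your proposal is correct and takes essentially the same route as the paper's Appendix~A proof: reduce part (i) via the representation \eqref{Gl ansatz gamma allgemein} to classifying the solutions of $-\hat\kappa''=\tfrac12\hat\kappa^3$, show that the family $\kappa_{a,b}$ realizes every nontrivial initial datum and conclude by uniqueness for the initial value problem, and prove part (ii) by reducing $b+aL$ modulo $T$ to a representative $\tilde b\in[-T/2,T/2)$. The differences are purely presentational: you motivate $a$ through the conserved energy $(\hat\kappa')^2+\tfrac14\hat\kappa^4$ and spell out the ceiling bookkeeping, whereas the paper directly posits $a=\bigl(\tfrac14\kappa_0^4+{\kappa_0'}^2\bigr)^{1/4}$, $b=-\sign(\kappa_0')\cn^{-1}\bigl(\kappa_0/(\sqrt2 a)\bigr)$, verifies the initial conditions, and disposes of (ii) in a single line.
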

    
  In Theorem \ref{Thm Navier BVP the general case} we provide the complete solution
  theory for the boundary value problem \eqref{Gl Navier BVP} which, due to Proposition \ref{Prop Navier}, is
  reduced to solving the equivalent problem \eqref{Gl Navier BC}. In order to write down the
  precise conditions when $(\sigma_1,\sigma_2,j)$-type solutions of \eqref{Gl Navier BVP} exist we
  need the functions  
  \begin{align} \label{Gl def f}
    \begin{aligned} 
    f_{\sigma_1,\sigma_2,j}(\kappa_1,\kappa_2,a)
    &:= \Big(j+\Big\lceil
    \frac{\sigma_1\cn^{-1}(\frac{\kappa_1}{\sqrt{2}a})-\sigma_2\cn^{-1}(\frac{\kappa_2}{\sqrt{2}a})}{T}\Big\rceil\Big)\cdot
    C  \\
    &- \sigma_1 \begin{cases}
      \int_{\frac{\kappa_1}{\sqrt{2}a}}^{\frac{\kappa_2}{\sqrt{2}a}}\frac{t^2}{\sqrt{1-t^4}}\,dt &
      ,\text{if }\sigma_1=\sigma_2 \\
      \int_{\frac{\kappa_1}{\sqrt{2}a}}^1 \frac{t^2}{\sqrt{1-t^4}}\,dt+\int_{\frac{\kappa_2}{\sqrt{2}a}}^1
      \frac{t^2}{\sqrt{1-t^4}}\,dt &,\text{if }\sigma_1\neq \sigma_2 
    \end{cases}
    \end{aligned}
  \end{align}
  where $\kappa_1,\kappa_2\in\R$ and $a\geq a_0:=\max\{|\kappa_1|,|\kappa_2|\}/\sqrt{2}$.
  
  \begin{thm} \label{Thm Navier BVP the general case}
    Let $A,B\in\R^2,A\neq B$ and $\kappa_1,\kappa_2\in\R$ be given, let 
    $\sigma_1,\sigma_2\in \{-1,+1\},{j\in\N_0}$. Then all solutions of the Navier problem
    \eqref{Gl Navier BVP} which are regular curves of type $(\sigma_1,\sigma_2,j)$ are given by \eqref{Gl
    ansatz gamma} for $a,b,L$ and the uniquely determined\footnote{The unique
    solvability for the rotation matrix $Q$ will be ensured by arranging $a,b,L$ in such a way that the
    Euclidean norms on both sides of the equation are equal, see also Remark \ref{Bem allgemeines
    Resultat} (c).} rotation matrix $Q\in SO(2)$ satisfying
    \begin{align} \label{Gl equation for bQ}
      \begin{aligned}
        b &= \sigma_1 \cn^{-1}\Big(\frac{\kappa_1}{\sqrt{2}a}\Big),\qquad\qquad
        Q^T(B-A) = \int_0^L \vecII{\cos(\int_0^s \kappa_{a,b}(r))\,dr}{\sin(\int_0^s \kappa_{a,b}(r))\,dr},\\
        L &=  \frac{1}{a}\Big( \Big(j+ \Big\lceil
        \frac{\sigma_1\cn^{-1}(\frac{\kappa_1}{\sqrt{2}a})-\sigma_2\cn^{-1}(\frac{\kappa_2}{\sqrt{2}a})}{T}\Big\rceil\Big)T+
        \sigma_2\cn^{-1}\Big(\frac{\kappa_2}{\sqrt{2}a}\Big)-\sigma_1\cn^{-1}\Big(\frac{\kappa_1}{\sqrt{2}a}\Big)\Big)
      \end{aligned}
    \end{align}  
    where $a$ is any solution of
    \begin{align}\label{Gl eq for a}
      \begin{aligned}
      \|B-A\|^2 = \frac{2}{a^2} f_{\sigma_1,\sigma_2,j}(\kappa_1,\kappa_2,a)^2 +
      \frac{(\kappa_2-\kappa_1)^2}{a^4}\qquad (a>0,a\geq a_0) \\
      \text{provided}\quad      
      \kappa_1=\pm\sqrt{2}a\Rightarrow\sigma_1=\pm 1,\quad
      \kappa_2=\pm\sqrt{2}a\Rightarrow\sigma_2=\pm 1.
      \end{aligned}
    \end{align}
    The Willmore energy of such a curve $\gamma$ is given by
    \begin{align*}
       W(\gamma) 
       = \big( a^4 \|B-A\|^2- (\kappa_2-\kappa_1)^2\big)^{1/2}
    \end{align*}
  \end{thm}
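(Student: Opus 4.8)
The plan is to read off the parameters $a,b,L,Q$ one by one from the reformulation \eqref{Gl Navier BC} supplied by Proposition \ref{Prop Navier}. By part (i) every solution has the form \eqref{Gl ansatz gamma} with $\kappa_{a,b}(0)=\kappa_1$, $\kappa_{a,b}(L)=\kappa_2$ and $\hat\gamma(L)-\hat\gamma(0)=B-A$, and by part (ii) being of type $(\sigma_1,\sigma_2,j)$ is encoded by \eqref{Gl b+aL} with some $\tilde b\in[-T/2,T/2)$. Since $\cn$ is even and strictly decreasing from $1$ to $-1$ on $[0,T/2]$ with inverse $\cn^{-1}$, the curvature conditions $\sqrt2 a\cn(b)=\kappa_1$ and $\sqrt2 a\cn(\tilde b)=\kappa_2$ (the latter because $b+aL\equiv\tilde b\bmod T$ and $\cn$ is $T$-periodic), together with $\sign(b)=\sigma_1$ and $\sign(\tilde b)=\sigma_2$, force $b=\sigma_1\cn^{-1}(\kappa_1/(\sqrt2 a))$ and $\tilde b=\sigma_2\cn^{-1}(\kappa_2/(\sqrt2 a))$; the edge cases where $\cn^{-1}$ returns $0$ or where $|\kappa_i|=\sqrt2 a$ are exactly the sign provisos appended to \eqref{Gl eq for a}. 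Solving \eqref{Gl b+aL} for $L$ then produces the stated formula, so the remaining content --- the equation \eqref{Gl eq for a} for $a$, the formula for $Q$, and the energy identity --- all flow from the single vector equation $\hat\gamma(L)-\hat\gamma(0)=B-A$.

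To exploit that vector equation efficiently I would avoid integrating $\Theta(s)=\int_0^s\kappa_{a,b}$ directly and instead use a conserved vector. Writing $e(s)=\vecII{\cos\Theta(s)}{\sin\Theta(s)}$, $J=\matII{0}{-1}{1}{0}$ and $n=Je$, one has the Frenet relations $e'=\kappa_{a,b}n$, $n'=-\kappa_{a,b}e$, and I claim that
\[ P:=\kappa_{a,b}^2\,e+2\kappa_{a,b}'\,n \]
is constant in $s$: differentiating and using the Willmore equation $2\kappa_{a,b}''=-\kappa_{a,b}^3$ from \eqref{Gl Willmore equation} makes both components of $P'$ vanish. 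The first integral $(\kappa_{a,b}')^2+\tfrac14\kappa_{a,b}^4=a^4$ (immediate from $\cn'=-\sn\dn$ and $\cn^4+2\sn^2\dn^2=1$) gives $\|P\|^2=\kappa_{a,b}^4+4(\kappa_{a,b}')^2=4a^4$. Setting $D:=\int_0^L e\,ds$, so that $B-A=QD$, hence $Q^T(B-A)=D$ and $\|B-A\|=\|D\|$, I would pair $D$ with the orthonormal frame $\{P/(2a^2),JP/(2a^2)\}$: since $\skp{P}{e}=\kappa_{a,b}^2$ and $\skp{JP}{e}=-2\kappa_{a,b}'$, the boundary conditions give $\skp{P}{D}=\int_0^L\kappa_{a,b}^2=2W(\gamma)$ and $\skp{JP}{D}=-2\int_0^L\kappa_{a,b}'=-2(\kappa_2-\kappa_1)$. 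Pythagoras then yields $a^4\|B-A\|^2=W(\gamma)^2+(\kappa_2-\kappa_1)^2$, which is at once the claimed energy formula and, after isolating $W(\gamma)$, the backbone of \eqref{Gl eq for a}.

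It remains to express $W(\gamma)$ through the data and $a$ alone, i.e. to prove $W(\gamma)=\sqrt2\,a\,f_{\sigma_1,\sigma_2,j}(\kappa_1,\kappa_2,a)$. Substituting $x=as+b$ gives $W(\gamma)=\tfrac12\int_0^L\kappa_{a,b}^2=a\int_b^{b+aL}\cn^2$, and the substitution $t=\cn(x)$ with $(\cn^{-1})'(t)=-\sqrt{2/(1-t^4)}$ evaluates $\int_0^{T/2}\cn^2=C/\sqrt2$, hence $\int_0^T\cn^2=\sqrt2 C$ per full period and $\int_0^{u}\cn^2=\sqrt2\int_{\cn(u)}^1 t^2(1-t^4)^{-1/2}\,dt$ for $u\in[0,T/2]$. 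Splitting $\int_b^{b+aL}=\big(j+\lceil(b-\tilde b)/T\rceil\big)\int_0^T+\int_b^{\tilde b}$ by $T$-periodicity, and computing $\int_b^{\tilde b}\cn^2=\sigma_2\int_0^{|\tilde b|}\cn^2-\sigma_1\int_0^{|b|}\cn^2$ via evenness of $\cn^2$, reproduces --- after distinguishing $\sigma_1=\sigma_2$ from $\sigma_1=-\sigma_2$ --- exactly $\sqrt2 f_{\sigma_1,\sigma_2,j}$. Plugging $W(\gamma)=\sqrt2 a f_{\sigma_1,\sigma_2,j}$ into $a^4\|B-A\|^2=W(\gamma)^2+(\kappa_2-\kappa_1)^2$ gives \eqref{Gl eq for a}. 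For the converse one runs the construction backwards: any $a$ solving \eqref{Gl eq for a} (respecting the sign provisos) determines $b,L$ and a vector $D$ with $\|D\|=\|B-A\|\neq0$, whence a unique $Q\in SO(2)$ with $QD=B-A$ (the footnote's point), and the resulting curve solves \eqref{Gl Navier BVP} and is of type $(\sigma_1,\sigma_2,j)$.

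I expect the main obstacle to be the bookkeeping in the last paragraph rather than any conceptual difficulty: one must match the ceiling term $\lceil(\sigma_1\cn^{-1}(\cdot)-\sigma_2\cn^{-1}(\cdot))/T\rceil$ and the case split in $f_{\sigma_1,\sigma_2,j}$ to the reduction of $\int_b^{b+aL}\cn^2$ modulo $T$, keeping careful track of the signs $\sigma_1,\sigma_2$ and of the principal branch of $\cn^{-1}$. The conserved-vector identity, by contrast, makes the geometric core of the argument --- the relation $a^4\|B-A\|^2=W(\gamma)^2+(\kappa_2-\kappa_1)^2$ --- essentially a one-line computation.
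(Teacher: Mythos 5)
Your argument is correct and reaches all the stated formulas, but it routes the central computation differently from the paper. The overall architecture coincides: both reduce via Proposition \ref{Prop Navier} to \eqref{Gl Navier BC} and \eqref{Gl b+aL}, read off $b=\sigma_1\cn^{-1}(\kappa_1/(\sqrt{2}a))$, $\tilde b=\sigma_2\cn^{-1}(\kappa_2/(\sqrt{2}a))$ and hence $L$, and both perform the same elliptic-integral bookkeeping to identify $\tfrac{1}{\sqrt2}\int_b^{b+aL}\cn^2$ with $f_{\sigma_1,\sigma_2,j}$ (and both only sketch the converse direction). The difference lies in how the Pythagorean identity $\|B-A\|^2=\alpha^2+\beta^2$ with $\alpha=\int_0^L\cn(as+b)^2\,ds$, $\beta=\int_0^L\sqrt2\sn(as+b)\dn(as+b)\,ds$ is obtained: the paper integrates $\kappa_{a,b}$ explicitly via the antiderivative $2\arctan(\sn/(\sqrt2\dn))$ and the addition formulas \eqref{Gl metaThm 2}, exhibiting the tangent vector as a fixed rotation (depending on $b$) of the pair $(\cn^2,\sqrt2\sn\dn)$; you instead introduce the conserved vector $P=\kappa^2e+2\kappa'n$ with $\|P\|=2a^2$ and pair the chord $D$ with the orthonormal frame $\{P/(2a^2),JP/(2a^2)\}$, which yields $a^4\|B-A\|^2=W(\gamma)^2+(\kappa_2-\kappa_1)^2$ in one stroke. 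Since $\langle P,D\rangle/(2a^2)=\alpha$ and $\langle JP,D\rangle/(2a^2)=-\beta$, the two computations encode the same fact --- the constant rotation in \eqref{Gl metaThm 2} is exactly the constancy of $P$ --- but your version avoids the explicit elliptic-function trigonometry, delivers the energy formula and the structure of \eqref{Gl eq for a} simultaneously, and generalizes more readily (the first integral $P$ exists for any solution of \eqref{Gl Willmore equation}, independently of the $\cn$ representation); the price is that you still need the paper's explicit form $\kappa_{a,b}$ for the $f_{\sigma_1,\sigma_2,j}$ reduction, so nothing is saved there.
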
   
  \begin{proof}    
    By Proposition \ref{Prop Navier} we have to find all admissible parameters $a,b,L$ satisfying 
    \eqref{Gl b+aL} and a rotation matrix $Q\in SO(2)$ such that the equation \eqref{Gl Navier BC} holds.
    We show that all these parameters are given by \eqref{Gl equation for bQ} and \eqref{Gl eq for a} and it
    is a straightforward calculation (which we omit) to check that parameters given by these
    equations indeed give rise to a $(\sigma_1,\sigma_2,j)$-type solution of \eqref{Gl Navier BC}. So let
    us in the following assume that $a,b,L,Q$ solve \eqref{Gl Navier BC}. Notice that we have $a>0$ by
    assumption, i.e. $\gamma$ is not the straight line.
        
    \medskip
    
    First let us remark that the boundary conditions $\kappa_{a,b}(0)=\kappa_1,\kappa_{a,b}(L)=\kappa_2$ imply
    \begin{equation}\label{Gl metaThm 0}
      \cn(b)=\frac{\kappa_1}{\sqrt{2}a}, \qquad \cn(b+aL)=\frac{\kappa_2}{\sqrt{2}a}.
    \end{equation}
    so that $|\cn|\leq 1$ gives $\sqrt{2}a\geq \max\{|\kappa_1|,|\kappa_2|\}$ and thus $a\geq a_0$.

    \medskip
           
 	{\it 1st step: Simplification.}\; From \eqref{Gl Navier BC} we infer       
    \begin{align} \label{Gl metaThm 1}
      \|B-A\|^2
      &= \|\hat\gamma(L)-\hat\gamma(0)\|^2 \notag \\
      &= \Big\|\int_0^L Q\vecII{\cos(\int_0^s \kappa_{a,b}(r)\,dr)}{\sin(\int_0^s
      \kappa_{a,b}(r)\,dr)} \,ds\Big\|^2 \notag\\
      &= \Big\|\int_0^L \vecII{\cos(\int_0^s \kappa_{a,b}(r)\,dr)}{\sin(\int_0^s \kappa_{a,b}(r)\,dr)} \,ds\Big\|^2
      \notag \\
      &= \Big( \int_0^L \cos\Big(\int_0^s \kappa_{a,b}(r)\,dr\Big)\,ds \Big)^2
        + \Big( \int_0^L \sin\Big(\int_0^s \kappa_{a,b}(r)\,dr\Big)\,ds \Big)^2
    \end{align}
    Using the
	identities 
	\begin{align*}
	  \frac{d}{dr} \Big(2\arctan\Big(\frac{\sn(ar+b)}{\sqrt{2}\dn(ar+b)}\Big)\Big)
	  &=\sqrt{2}a\cn(ar+b)=\kappa_{a,b}(r) \qquad (r\in\R),  \\
	  \cos(2\arctan(x)-2\arctan(y))
	  &= \frac{2(1+xy)^2}{(1+x^2)(1+y^2)}-1
	  \;\;\qquad (x,y\in\R),\\
	  \sin(2\arctan(x)-2\arctan(y))
	  &= \frac{2(1+xy)(x-y)}{(1+x^2)(1+y^2)}
	  \qquad\qquad (x,y\in\R)
	\end{align*}
	we obtain after some rearrangements
    \begin{align} \label{Gl metaThm 2}
      \begin{aligned}
      \cos\Big(\int_0^s \kappa_{a,b}(r)\,dr\Big)
      &= \cn(b)^2 \cn(as+b)^2 + 2\sn(b)\dn(b)\sn(as+b)\dn(as+b), \\
      \sin\Big(\int_0^s \kappa_{a,b}(r)\,dr\Big)
      &= -\sqrt{2}\sn(b)\dn(b)\cn(as+b)^2 + \sqrt{2}\cn(b)^2\sn(as+b)\dn(as+b).
      \end{aligned}
    \end{align}
    In order to proceed with calculating $\|B-A\|^2$ we set
    \begin{equation}\label{Gl metaThm 3}
      \alpha:= \int_0^L \cn(as+b)^2\,ds, \qquad \beta:=\int_0^L \sqrt{2}\sn(as+b)\dn(as+b)\,ds.
    \end{equation}
    so that \eqref{Gl metaThm 1},\eqref{Gl metaThm 2} yield
    \begin{align}\label{Gl metaThm 6}
      \begin{aligned}
      \|B-A\|^2
      &= \Big( \cn(b)^2\alpha + \sqrt{2}\sn(b)\dn(b)\beta
      \Big)^2 + \Big( -\sqrt{2}\sn(b)\dn(b)\alpha + \cn(b)^2\beta \Big)^2 \\
      &= \alpha^2 (\cn(b)^4+2\sn(b)^2\cn(b)^2) + \beta^2 (\cn(b)^4+2\sn(b)^2\dn(b)^2) \\
      &= \alpha^2+\beta^2 \\
      &\stackrel{\eqref{Gl metaThm 3}}{=} \frac{1}{a^2} \Big( \int_b^{b+aL} \cn(t)^2\,dt \Big)^2 
         + \Big( \Big[-\frac{\sqrt{2}}{a}\cn(as+b)\Big]^L_0\Big)^2 \\
      &\stackrel{\eqref{Gl metaThm 0}}{=} \frac{1}{a^2} \Big( \int_b^{b+aL} \cn(t)^2\,dt \Big)^2 
         + \frac{(\kappa_2-\kappa_1)^2}{a^4}.
      \end{aligned}
    \end{align}
    In order to derive equation \eqref{Gl eq for a} it therefore remains to show 
    \begin{equation}\label{Gl metaThm 4}
      \frac{1}{\sqrt{2}} \int_b^{b+aL} \cn(t)^2\,dt=  f_{\sigma_1,\sigma_2,j}(\kappa_1,\kappa_2,a).
    \end{equation} 
    
    \medskip
    
    {\it 2nd step: Proving \eqref{Gl metaThm 4}.}\; For notational convenience we define $\tilde \kappa_1:=
    \kappa_1/(\sqrt{2}a),{\tilde \kappa_2:= \kappa_2/(\sqrt{2}a)}$ so that $a\geq a_0$ implies
    $\tilde\kappa_1,\tilde\kappa_2\in [-1,1]$. Then \eqref{Gl
    b+aL} and the boundary conditions \eqref{Gl metaThm 0} yield
    \begin{align}\label{Gl metaThm 7}
      \begin{aligned}
      b &= \sigma_1 \cn^{-1}(\tilde\kappa_1),\qquad\quad\tilde\kappa_1=\pm 1 \Rightarrow \sigma_1=\pm 1 \\
     \tilde b &= \sigma_2 \cn^{-1}(\tilde\kappa_2), \qquad\quad\tilde\kappa_2=\pm 1 \Rightarrow \sigma_2=\pm
     1.
     \end{aligned}
    \end{align}
    Using the shorthand notation $m:= j+\lceil (b-\tilde b)/T\rceil$ we obtain
    \begin{align*}
      &\; \frac{1}{\sqrt{2}}\int_b^{b+aL} \cn(t)^2\,dt \\ 
      &=  \frac{1}{\sqrt{2}}\int_{\sigma_1 \cn^{-1}(\tilde\kappa_1)}^{mT+\sigma_2
      \cn^{-1}(\tilde\kappa_2)} \cn(t)^2\,dt \\
      &=  \frac{m}{\sqrt{2}}\int_0^T \cn(t)^2\,dt +  \frac{1}{\sqrt{2}}\int_{\sigma_1
      \cn^{-1}(\tilde\kappa_1)}^{\sigma_2 \cn^{-1}(\tilde\kappa_2)} \cn(t)^2\,dt \\
      &= 2\sqrt{2}m\int_0^{T/4} \cn(t)^2\,dt +  \frac{\sigma_1}{\sqrt{2}}
        \begin{cases}
          \int_{\cn^{-1}(\tilde\kappa_1)}^{\cn^{-1}(\tilde\kappa_2)} \cn(t)^2\,dt &,\text{if }
          \sigma_1=\sigma_2\\
          -\int_0^{\cn^{-1}(\tilde\kappa_1)} \cn(t)^2\,dt 
           -\int_0^{\cn^{-1}(\tilde\kappa_2)} \cn(t)^2\,dt &,\text{if }\sigma_1\neq \sigma_2
        \end{cases} \\
     &=  2\sqrt{2}m\int_0^1  t^2\sqrt{\frac{2}{1-t^4}}\,dt - \frac{\sigma_1}{\sqrt{2}}
        \begin{cases}
           \int_{\tilde\kappa_1}^{\tilde\kappa_2} t^2\sqrt{\frac{2}{1-t^4}}\,dt  &,\text{if }
          \sigma_1=\sigma_2\\
          \int_{\tilde\kappa_1}^1 t^2\sqrt{\frac{2}{1-t^4}}\,dt 
           + \int_{\tilde\kappa_2}^1 t^2\sqrt{\frac{2}{1-t^4}}\,dt  &,\text{if }\sigma_1\neq \sigma_2
        \end{cases} \\
     &=  mC -\sigma_1
        \begin{cases}
          \int_{\tilde\kappa_1}^{\tilde\kappa_2} \frac{t^2}{\sqrt{1-t^4}}\,dt  &,\text{if }
          \sigma_1=\sigma_2,\\
          \int_{\tilde\kappa_1}^1\frac{t^2}{\sqrt{1-t^4}}\,dt 
           + \int_{\tilde\kappa_2}^1\frac{t^2}{\sqrt{1-t^4}}\,dt  &,\text{if }\sigma_1\neq\sigma_2
        \end{cases}   \\
      &= f_{\sigma_1,\sigma_2,j}(\kappa_1,\kappa_2,a).
    \end{align*}
    The formulas for $L$ and $W$ follow from \eqref{Gl b+aL} and \eqref{Gl metaThm 7}:
    \begin{align} \label{Gl length energy}
      \begin{aligned}
      L 
      &=  \frac{1}{a} \big( (j+\lceil (b-\tilde b)/T\rceil)T+\tilde b-b\big) \\
      &= \frac{1}{a}\Big( \Big(j+\Big\lceil
      \frac{\sigma_1\cn^{-1}(\tilde\kappa_1)-\sigma_2\cn^{-1}(\tilde\kappa_2)}{T}\Big\rceil\Big)T+\sigma_2
      \cn^{-1}(\tilde\kappa_2)-\sigma_1 \cn^{-1}(\tilde\kappa_1) \Big), \\
      W &= \frac{1}{2}\int_\gamma \kappa_\gamma^2 
      \; = \; \frac{1}{2}\int_0^L \kappa_{a,b}(s)^2\,ds
      \; = \; a^2 \int_0^L \cn(as+b)^2\,ds\\
        &\stackrel{\eqref{Gl metaThm 3}}{=} a^2\cdot \alpha 
      \; \stackrel{\eqref{Gl metaThm 6}}{=} \;a^2 \cdot (\|B-A\|^2-\beta^2)^{1/2} \\
        &=  \big( a^4 \|B-A\|^2-(\kappa_2-\kappa_1)^2\big)^{1/2}.
      \end{aligned}
    \end{align}
  \end{proof}

   \begin{bem} \label{Bem allgemeines Resultat}~ 
     \begin{itemize}       
       \item[(a)] Reviewing the proof of Theorem \ref{Thm Navier BVP the general case} one finds that the
       boundary value problem \eqref{Gl Navier BVP} does not have nontrivial solutions in case
       $A=B$, i.e. there are no closed Willmore curves. This result has already been obtained by Linn\'{e}r,
       cf. Proposition 3 in \cite{Lin_Explicit_elastic}. In fact, \eqref{Gl metaThm 6} would imply $aL=0$ so
       that the solution must be the ''trivial curve'' consisting of precisely one point.        
       \item[(b)] For computational purposes it might be interesting to know that the integrals involved in
       the definition of $Q$ need not be calculated numerically. With some computational effort one finds that
       $Q$ is the unique rotation matrix satisfying $Qw=B-A$ where the vector $w=(w_1,w_2)\in\R^2$ is given
       by
       \begin{align*}
    	  w_1
          &= \frac{\kappa_1^2(a^4 \|B-A\|^2-(\kappa_2-\kappa_1)^4)^{1/2} + 
          \sigma_1(\kappa_1-\kappa_2)(4a^4-\kappa_1^4)^{1/2} }{2a^4},   \\
    	  w_2 &= \frac{-\sigma_1 ((4a^4-\kappa_1^4)(a^4\|B-A\|^2-(\kappa_2-\kappa_1)^4))^{1/2} + 
    	  \kappa_1^2(\kappa_1-\kappa_2)}{2a^4}.
  	   \end{align*}
  	   Hence, $Q$ can be expressed in terms of $B-A,w_1,w_2$ only.
     \end{itemize}    
   \end{bem}

  \subsection{Proof of Theorem \ref{Thm Navier kappa1=kappa2}} \label{subsec Navier symmertric} ~ 
  
  \medskip
  
  \subsubsection{Proof of Theorem \ref{Thm Navier kappa1=kappa2} (i) - Existence results} ~ 
  
  \medskip
  
  In order to prove Theorem \ref{Thm Navier kappa1=kappa2} (i) we apply Theorem \ref{Thm Navier BVP the
  general case} to the special case $\kappa_1=\kappa_2=\kappa$. As before we set
  $\sqrt{2}a_0:= \max\{|\kappa_1|,|\kappa_2|\}=|\kappa|$. Theorem \ref{Thm Navier BVP the general case} tells
  us that $a>0,a\geq a_0$ generates a solution of type $(\sigma_1,\sigma_2,j)$ if
  and only if
  \begin{align*} 
    \begin{aligned}
     \|B-A\| 
     &= \frac{\sqrt{2}}{a} f_{\sigma_1,\sigma_2,j}(\kappa,\kappa,a) \\
     &= \frac{\sqrt 2}{a} \Big( 
        \Big(j+\Big\lceil
      \frac{(\sigma_1-\sigma_2)\cn^{-1}(\frac{\kappa}{\sqrt{2}a})}{T}\Big\rceil \Big)C     
       - \sigma_1 \begin{cases}
      0 &, \text{if }\sigma_1=\sigma_2, \\
      2\int_{\frac{\kappa}{\sqrt{2}a}}^1 \frac{t^2}{\sqrt{1-t^4}}\,dt  &, \text{if }\sigma_1\neq \sigma_2 
    \end{cases}  \Big) \\
    &= \frac{\sqrt 2}{a} \cdot
      \begin{cases}
        jC &, \text{if }\sigma_1=\sigma_2 \text{ or }(\sigma_1,\sigma_2,\frac{\kappa}{\sqrt{2}a})=(-1,1,-1),
        \\
        jC+2\int_{\frac{\sigma_2\kappa}{\sqrt{2}a}}^1 \frac{t^2}{\sqrt{1-t^4}}\,dt  &, \text{if
        }\sigma_1\neq \sigma_2 \text{ and }(\sigma_1,\sigma_2,\frac{\kappa}{\sqrt{2}a})\neq (-1,1,-1)
        \end{cases} 
     \end{aligned}
  \end{align*}  
  provided $|\kappa|=\sqrt{2}a$ implies $\sigma_1=\sigma_2=\sign(\kappa)$. Hence, the
  above equation  is equivalent to \\~\\~\\
  \begin{subequations} \label{Gl main equation symmetric} 
    \renewcommand\theequation*{(\theparentequation)$_j$}
    \vspace{-3\baselineskip}     
      \begin{align*} 
        &\|B-A\|
      =  \frac{\sqrt 2}{a} \cdot
      \begin{cases}
        jC &, \text{if }\sigma_1=\sigma_2,   \\
        jC+2\int_{\frac{\sigma_2\kappa}{\sqrt{2}a}}^1 \frac{t^2}{\sqrt{1-t^4}}\,dt  &, \text{if
        }\sigma_1\neq \sigma_2
        \end{cases} \\
     &\text{provided}\quad |\kappa|=\sqrt{2}a \Rightarrow \sigma_1=\sigma_2=\sign(\kappa)
      \end{align*}
  \end{subequations}
  
  so that it remains to determine all solutions $a>0,a\geq a_0$ of \eqref{Gl main equation symmetric}$_j$.
  First let us note that there are no $(\sigma_1,\sigma_2,0)$-type
  solutions with $\sigma_1=\sigma_2$, see \eqref{Gl main equation symmetric}$_0$. Hence, by definition of the
  sets $\mathcal{F}_0,\mathcal{F}_1,\ldots,$ 
  all solutions of the symmetric Navier problem \eqref{Gl Navier BVP symm} are contained in
  $\bigcup_{k=0}^\infty\mathcal{F}_k$ which shows $\mathcal{F}=\bigcup_{k=0}^\infty\mathcal{F}_k$. In order
  to prove the existence and nonexistence results stated in Theorem \ref{Thm Navier kappa1=kappa2} it
  therefore remains to find all solutions belonging to $\mathcal{F}_k= \mathcal{F}_k^+ \cup \mathcal{F}_k^-$
  for any given $k\in\N_0$.
  In the following we will without loss of generality restrict ourselves to the case $\kappa\geq 0$. This is
  justified since every $(\sigma_1,\sigma_2,j)$-type solution of \eqref{Gl Navier BVP symm} generated by
  $a,b,L$ gives rise to a $(-\sigma_1,-\sigma_2,j)$-type solution of \eqref{Gl Navier BVP symm} with $\kappa$
  replaced by $-\kappa$ which is generated by $a,b-\sigma_1T/2,L$. Let us recall that figure
  \ref{Fig 1} might help to understand and to visualize the proof of Theorem \ref{Thm Navier kappa1=kappa2}
  (i).

  \medskip

  {\it Proof of Theorem \ref{Thm Navier kappa1=kappa2} (i)(a)-(d):}\;
  First we determine all solutions belonging to $\mathcal{F}_0^+$. All solutions from $\mathcal{F}_0^+$ with
  $\kappa=0$ are the straight line from $A$ to $B$ and the solution of type
  $(-1,1,0)$ which is generated by $a= C(\sqrt{2}\|B-A\|)^{-1}$, see \eqref{Gl
  main equation symmetric}. The solutions from $\mathcal{F}_0^+$ with $\kappa>0$ are of type
  $(-1,1,0)$ and therefore generated by $a>0,a\geq a_0$ such that
  \begin{equation} \label{Gl F0 I}
    \kappa\|B-A\| = 4z\int_z^1 \frac{t^2}{\sqrt{1-t^4}}\,dt
    \qquad\text{where } z = \frac{\kappa}{\sqrt{2}a} \in (0,1),
  \end{equation}
  see \eqref{Gl main equation symmetric}. Notice that $z>0$ is due to the assumption $\kappa>0$ whereas $z\leq
  1$ follows from $a\geq a_0$ and $z\neq 1$ follows from the last line in \eqref{Gl main equation symmetric}. The
  function defined by the right hand side is positive and strictly concave on $(0,1)$ with global maximum $M_0$. Hence, we
  find two, one respectively no solution in $\mathcal{F}_0^+$ according to $\kappa\|B-A\|$ being smaller
  than, equal to or larger than $M_0$. This proves the assertions (i)(a)-(d) in Theorem \ref{Thm Navier kappa1=kappa2}.

  \medskip

  {\it Proof of Theorem \ref{Thm Navier kappa1=kappa2} (i)(e)-(h):}\; Now we investigate the solutions
  belonging to $\mathcal{F}_k^+$ for $k\geq 1$. From \eqref{Gl main equation symmetric} we infer that solutions of type
  $(\sigma_1,\sigma_2,k)$ with $\sigma_1=\sigma_2$ exist if and only if
  \begin{align*}
    \sigma_1=\sigma_2=-1,\; 0\leq\kappa\|B-A\|<2kC
    \quad\text{ or }\quad
    \sigma_1=\sigma_2=1,\; 0\leq \kappa\|B-A\|\leq 2kC.
  \end{align*}
  In both cases the solution is unique and given by $a=\sqrt{2}kC\|B-A\|^{-1}$.
  (Notice that the special case $\kappa\|B-A\|=2kC$ corresponds to $\kappa=\sqrt{2}a$ so that the last
  line in \eqref{Gl main equation symmetric} is responsable for the slight difference between the cases
  $\sigma_1=\sigma_2=-1$ and $\sigma_1=\sigma_2=1$.)

  \medskip

  Next we study the solutions belonging to $\mathcal{F}_k^+$ which are of type
  $(1,-1,k-1)$. According to \eqref{Gl main equation symmetric} these solutions are
  generated by all $a>0,a\geq a_0$ which satisfy
  \begin{equation} \label{Gl Fk I}
    \kappa\|B-A\| = 2z\Big((k-1)C+2\int_{-z}^1 \frac{t^2}{\sqrt{1-t^4}}\,dt\Big)
    \qquad\text{where }z= \frac{\kappa}{\sqrt{2}a} \in [0,1).
  \end{equation}
  Here, $z\in [0,1)$ follows from $a\geq a_0$ and the last line in \eqref{Gl
  main equation symmetric}. Since the function defined by the right hand side increases from 0 to
  $2kC$ we have precisely one solution of type $(1,-1,k-1)$ for
  $0\leq \kappa \|B-A\|<2kC$ and no such solution otherwise.

  \medskip

  Finally we study the solutions in $\mathcal{F}_k^+$ which are of type $(-1,1,k)$.
  For this kind of solutions equation \eqref{Gl main equation symmetric} reads
  \begin{equation} \label{Gl Fk II}
    \kappa\|B-A\| = 2z\Big(kC+2\int_z^1 \frac{t^2}{\sqrt{1-t^4}}\,dt\Big)
    \qquad\text{where }z= \frac{\kappa}{\sqrt{2}a} \in [0,1).
  \end{equation}
  The function given by the right hand side is positive and strictly concave on $[0,1)$ with global
  maximum $M_k$ and which attains the values 0 at $z=0$ and $2kC$ at $z=1$. Hence we find precisely one
   $(-1,1,k)$-type solution for $0\leq \kappa\|B-A\|\leq 2kC$, two $(-1,1,k)$-type solutions for $2kC<
  \kappa\|B-A\|<M_k$, one $(-1,1,k)$-type solution for $\kappa\|B-A\|=M_k$ and no such solution otherwise.

\medskip

  \subsubsection{Proof of Theorem \ref{Thm Navier kappa1=kappa2} (ii) - Willmore curves of minimal Willmore
  energy} ~
  
  \medskip
  
  In case $\kappa=0$ the straight line joining $A$ and $B$ is the unique minimal energy solution of~\eqref{Gl
  Navier BVP symm} so that it remains to analyse the case $\kappa\neq 0$. According to Theorem~\ref{Thm
  Navier kappa1=kappa2} the energy of a nontrivial $(\sigma_1,\sigma_2,j)$-type solution of \eqref{Gl Navier
  BVP symm} is given by $W = a^2\|B-A\|^2$ where $a$ solves~\eqref{Gl main equation symmetric}$_j$. Hence,
  finding the minimal energy solution of \eqref{Gl Navier BVP symm} is equivalent to finding
  $\sigma_1,\sigma_2\in\{-1,+1\},j\in\N_0$ and a solution $a$ of \eqref{Gl main equation
  symmetric}$_j$ such that $a$ is smallest possible. To this end we will dinstinguish the cases
  $$
    \text{(a)}\; 2kC<|\kappa|\|B-A\|\leq M_k\quad (k\in\N_0)
    \qquad
    \text{(b)}\; M_k<|\kappa|\|B-A\|\leq 2(k+1)C\quad (k\in\N_0).
  $$
  First let us note that in the above situation we only have to consider $(\sigma_1,\sigma_2,j)$-type
  solutions where $j$ is smallest possible.

  \medskip

  In case (a) there are two $(-\sign(\kappa),\sign(\kappa),k)$-type solutions (belonging to $\mathcal{F}_k$)
  and one $(\sign(\kappa),-\sign(\kappa),k)$-type solution (belonging to $\mathcal{F}_{k+1}$), see figure
  \ref{Fig 1}. All other $(\sigma_1,\sigma_2,j)$-type solutions satisfy $j\geq k+1$ and need not be taken into
  account by the observation we made above. According to the formulas \eqref{Gl F0 I},\eqref{Gl Fk
  I},\eqref{Gl Fk II} the smallest possible $a$ occurs when $\sigma_1=-\sign(\kappa),\sigma_2=\sign(\kappa)$
  and $z:=|\kappa|/(\sqrt{2}a)\in (0,1)$ is the larger solution of \eqref{Gl F0 I} in case $k=0$ respectively
  \eqref{Gl Fk II} in case $k\geq 1$. 

  \medskip

  In case (b) there is one $(\sign(\kappa),-\sign(\kappa),k)$-type solution whereas all other solutions
  are of type $(\sigma_1,\sigma_2,j)$ with $j\geq k+1$. Hence, this Willmore curve has minimal
  Willmore energy among all solutions of \eqref{Gl Navier BVP symm}.
  
  \medskip
  
  \subsubsection{Proof of Theorem \ref{Thm Navier kappa1=kappa2} (iii) - Symmetry results} ~
  
  \medskip
  
  In the following let $R(\tau)\in SO(2)$ denote the orthogonal matrix which realizes a rotation through
  the rotation angle $\tau\in\R$, i.e.
  $$
    R(\tau) := \matII{\cos(\tau)}{-\sin(\tau)}{\sin(\tau)}{\cos(\tau)}.
  $$
  With the following Proposition at hand the proof of our symmetry result for the solutions
  $\gamma:[0,1]\to\R^2$ from Theorem \ref{Thm Navier kappa1=kappa2} is
  reduced to proving symmetry results of the  curvature function
  $\hat\kappa=\det(\hat\gamma',\hat\gamma''):[0,L]\to\R$ of its arc-length parametrization
  ${\hat\gamma:[0,L]\to\R^2}$. Let us recall that $\hat\kappa$ is said to be symmetric in case
  $\hat\kappa(L-s)=\hat\kappa(s)$ for all $s\in [0,L]$ and it is called pointwise symmetric (with respect to
  its midpoint) in case ${\hat\kappa(L-s)=-\hat\kappa(s)}$ for all $s\in [0,L]$.

  \begin{prop} \label{Prop symmetry}
    Let $\gamma:[0,1]\to\R^2$ be a twice differentiable regular curve and let
    ${\hat\gamma:[0,L]\to\R^2}$ denote its parametrization by arc-length with
     curvature function $\hat\kappa$. Then $\gamma$ is
    \begin{itemize}
      \item[(i)] axially symmetric if and only if $\hat\kappa$ is symmetric,
      \item[(ii)] pointwise symmetric if and only if $\hat\kappa$ is pointwise symmetric.
    \end{itemize}
  \end{prop}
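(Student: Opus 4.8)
The plan is to reduce both equivalences to statements about the tangent angle and to exploit the determinant formula $\hat\kappa(s)=\det(\hat\gamma'(s),\hat\gamma''(s))$, which holds because $\hat\gamma$ is parametrized by arc-length. I would dispatch the two ``only if'' directions together, since they are completely parallel, and then handle the two ``if'' directions, which are the ones that actually require producing a symmetry.

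For the forward implications I would simply differentiate the defining constancy conditions. In case (i), axial symmetry means $\hat\gamma(s)+P\hat\gamma(L-s)\equiv\text{const}$ for some $P\in O(2)$ with $\det P=-1$; differentiating once and twice gives $\hat\gamma'(s)=P\hat\gamma'(L-s)$ and $\hat\gamma''(s)=-P\hat\gamma''(L-s)$. Substituting into the determinant formula and using $\det(Pv,Pw)=\det(P)\det(v,w)$ with $\det P=-1$ yields $\hat\kappa(s)=-\det(P)\hat\kappa(L-s)=\hat\kappa(L-s)$, i.e. $\hat\kappa$ is symmetric. In case (ii), differentiating $\hat\gamma(s)+\hat\gamma(L-s)\equiv\text{const}$ gives $\hat\gamma'(s)=\hat\gamma'(L-s)$ and $\hat\gamma''(s)=-\hat\gamma''(L-s)$, and now $\det(v,-w)=-\det(v,w)$ produces $\hat\kappa(s)=-\hat\kappa(L-s)$, i.e. pointwise symmetry of $\hat\kappa$.

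For the backward implications I would invoke the representation \eqref{Gl ansatz gamma allgemein}, writing $\hat\gamma'(s)=Q\vecII{\cos\phi(s)}{\sin\phi(s)}$ with tangent angle $\phi(s)=\int_0^s\hat\kappa(r)\,dr$ and $Q\in SO(2)$. In case (i), symmetry of $\hat\kappa$ gives $\frac{d}{ds}(\phi(s)+\phi(L-s))=\hat\kappa(s)-\hat\kappa(L-s)=0$, so $\phi(s)+\phi(L-s)\equiv\psi$ is constant. I would then take the reflection $\tilde P:=\matII{\cos\psi}{\sin\psi}{\sin\psi}{-\cos\psi}$, which satisfies $\tilde P\vecII{\cos\tau}{\sin\tau}=\vecII{\cos(\psi-\tau)}{\sin(\psi-\tau)}$, and set $P:=Q\tilde P Q^T\in O(2)$, noting $\det P=\det\tilde P=-1$. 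A direct computation using $\psi-\phi(L-s)=\phi(s)$ gives $P\hat\gamma'(L-s)=\hat\gamma'(s)$, hence $\frac{d}{ds}(\hat\gamma(s)+P\hat\gamma(L-s))=0$, so $\gamma$ is axially symmetric. In case (ii), pointwise symmetry of $\hat\kappa$ gives $\frac{d}{ds}(\phi(s)-\phi(L-s))=\hat\kappa(s)+\hat\kappa(L-s)=0$, so $\phi(s)-\phi(L-s)$ is constant; evaluating at the midpoint $s=L/2$ forces this constant to vanish, whence $\phi(s)=\phi(L-s)$, so $\hat\gamma'(s)=\hat\gamma'(L-s)$ and $\hat\gamma(s)+\hat\gamma(L-s)$ is constant.

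I expect the main obstacle to be the backward direction of (i): one must construct the reflection $P$ with the correct determinant and verify the functional equation $P\hat\gamma'(L-s)=\hat\gamma'(s)$, which requires the conjugation $P=Q\tilde P Q^T$ so that $Q$ cancels correctly. The only other delicate point, namely that the additive constant in the backward direction of (ii) must vanish, is handled cleanly by evaluating $\phi(s)-\phi(L-s)$ at $s=L/2$.
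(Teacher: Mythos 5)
Your proposal is correct and follows essentially the same route as the paper: the ``only if'' directions by differentiating the constancy condition and using $\det(Pv,Pw)=\det(P)\det(v,w)$ with $\det P=-1$, and the ``if'' directions via the tangent-angle representation, where your reflection $P=Q\tilde PQ^T$ coincides with the paper's matrix $R(\theta_L)\matII{1}{0}{0}{-1}R(-\theta)$ once one notes $\psi=\phi(L)$ and $\theta_L=\theta+\phi(L)$. The only cosmetic difference is that in (ii) you fix the additive constant by evaluating at $s=L/2$, whereas the paper deduces $\theta_L=\theta$ from $\int_0^L\hat\kappa=0$; both are valid.
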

  \begin{proof}
    Let us start with proving (i). We first assume that $\hat\kappa$ is symmetric, i.e.
    $\hat\kappa(s)=\hat\kappa(L-s)$ for all $s\in\R$. We have to prove that there is a $P\in O(2)$ with
    $\det(P)=-1$ such that
    $\hat\gamma(L-s)+P\hat\gamma(s)$ is constant. Indeed, the symmetry assumption on $\hat\kappa$ implies
    $$
      \theta+\int_0^{L-s}  \hat\kappa(r)\,dr + \int_0^s \hat\kappa(r)\,dr = \theta + \int_0^L
      \hat\kappa(r)\,dr =:\theta_L \qquad\text{for all }s\in [0,L]
    $$
    and we obtain from \eqref{Gl ansatz gamma allgemein}
    \begin{align*}
      \frac{d}{ds}\big( \hat\gamma(L-s)\big)
      &= -\hat\gamma'(L-s)\\
      &= \vecII{-\cos(\theta+\int_0^{L-s}\hat\kappa(r)\,dr)}{-\sin(\theta+\int_0^{L-s}\hat\kappa(r)\,dr)} \\
      &= \vecII{-\cos(\theta_L -\int_0^s\hat\kappa(r)\,dr)}{-\sin(\theta_L -\int_0^s\hat\kappa(r)\,dr)}  \\
      &= - R(\theta_L) \vecII{\cos(-\int_0^s\hat\kappa(r)\,dr)}{\sin(-\int_0^s\hat\kappa(r)\,dr)}  \\
      &= - R(\theta_L)
      \matII{1}{0}{0}{-1}\vecII{\cos(\int_0^s\hat\kappa(r)\,dr)}{\sin(\int_0^s\hat\kappa(r)\,dr)} \\
      &= - R(\theta_L)
      \matII{1}{0}{0}{-1}R(-\theta)
      \vecII{\cos(\theta+\int_0^s\hat\kappa(r)\,dr)}{\sin(\theta+\int_0^s\hat\kappa(r)\,dr)} \\
      &= -P\hat\gamma'(s)
      \qquad\text{where }P :=  R(\theta_L) \matII{1}{0}{0}{-1}R(-\theta).
    \end{align*}
    Since $P\in O(2)$ satisfies $\det(P)=-1$ we obtain that $\gamma$ is axially symmetric. Vice
    versa, the condition $\hat\kappa(s)=\hat\kappa(L-s)$ for all $s\in [0,L]$ is also necessary for
    $\gamma$ to be axially symmetric since $\hat\gamma(L-s)+P\hat\gamma(s)=const$ implies
    $\hat\gamma'(L-s)=P\hat\gamma'(s),\hat\gamma''(L-s)=-P\hat\gamma''(s)$ and thus
    \begin{align*}
      \hat\kappa(L-s)
      &= \det(\hat\gamma'(L-s),\hat\gamma''(L-s)) \\
      &= \det(P\hat\gamma'(s),-P\hat\gamma''(s)) \\
      &= -\det(P)\det(\hat\gamma'(s),\hat\gamma''(s)) \\
      &= \hat\kappa(s) \qquad\text{for all }s\in [0,L].
    \end{align*}

    \medskip

    The proof of (ii) is similar. Assuming $\hat\kappa(s)=-\hat\kappa(L-s)$ for all $s\in [0,L]$ one deduces
    $\theta=\theta_L$ and thus
    $$
      \theta+\int_0^{L-s} \hat\kappa(r)\,dr - \int_0^s \hat\kappa(r)\,dr = \theta_L = \theta
      \qquad\text{for all }s\in [0,L]
    $$
    which, as above, leads to $\hat\gamma(L-s)+\hat\gamma(s)=const$.
  \end{proof}

  \medskip

  In our proof of Theorem \ref{Thm Navier kappa1=kappa2} (iii) we will apply Proposition \ref{Prop symmetry}
  to Willmore curves which, according to Proposition \ref{Prop Navier}, are given by \eqref{Gl ansatz gamma}.
  In this context the above result reads as follows.

  \begin{cor} \label{Cor symmetry}
    Let $\gamma:[0,1]\to\R^2$ be a Willmore curve given by \eqref{Gl ansatz gamma}. Then $\gamma$ is
    \begin{itemize}
      \item[(i)] axially symmetric if and only if $aL+2b$ is an even multiple of $T/2$,
      \item[(ii)] pointwise symmetric if and only if $aL+2b$ is an odd multiple of $T/2$.
    \end{itemize}
  \end{cor}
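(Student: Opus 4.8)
The plan is to derive Corollary~\ref{Cor symmetry} directly from Proposition~\ref{Prop symmetry} by translating the conditions ``$\hat\kappa$ is symmetric'' and ``$\hat\kappa$ is pointwise symmetric'' into conditions on the generating parameters $a,b,L$. Since $\gamma$ is given by \eqref{Gl ansatz gamma}, its arc-length curvature function is $\hat\kappa(s)=\kappa_{a,b}(s)=\sqrt{2}a\cn(as+b)$ on $[0,L]$. By Proposition~\ref{Prop symmetry} it therefore suffices to characterize when $\kappa_{a,b}(L-s)=\kappa_{a,b}(s)$ holds for all $s\in[0,L]$ (for part (i)) and when $\kappa_{a,b}(L-s)=-\kappa_{a,b}(s)$ holds for all $s\in[0,L]$ (for part (ii)).

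First I would set up the symmetry condition explicitly. We have $\kappa_{a,b}(L-s)=\sqrt{2}a\cn(a(L-s)+b)=\sqrt{2}a\cn(aL+b-as)$, while $\kappa_{a,b}(s)=\sqrt{2}a\cn(as+b)$. Thus part (i) requires $\cn(aL+b-as)=\cn(as+b)$ for all $s$, and part (ii) requires $\cn(aL+b-as)=-\cn(as+b)$ for all $s$. Writing $u:=as$, these become functional identities in $u$ that must hold on an interval, and since $\cn$ is real-analytic they must hold for all $u\in\R$. For (i) I would use that $\cn$ is even, so $\cn(as+b)=\cn(-as-b)$; matching the argument $aL+b-as$ against $-as-b$ shifted by a period gives $aL+b-as=-(as+b)+nT$ for some integer $n$, i.e.\ $aL+2b=nT$, which is exactly the statement that $aL+2b$ is an even multiple of $T/2$.

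For part (ii) I would invoke the $T/2$-antiperiodicity $\cn(x+T/2)=-\cn(x)$ recorded in Section~\ref{sec prelim}. This lets me rewrite $-\cn(as+b)=\cn(as+b+T/2)$, so the pointwise-symmetry identity becomes $\cn(aL+b-as)=\cn(as+b+T/2)$; combining with the evenness of $\cn$ as above yields $aL+b-as=-(as+b+T/2)+nT$, hence $aL+2b=(n-\tfrac12)T=(2n-1)T/2$, an odd multiple of $T/2$. In each direction the key algebraic fact is that two arguments give the same $\cn$-value for all shifts precisely when they are congruent modulo the symmetry group of $\cn$ (even reflections together with period-$T$ translations), which is what forces the affine-in-$s$ identity to reduce to a single congruence on the constant term $aL+2b$.

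The main obstacle is being careful that the identity is required only on the finite interval $s\in[0,L]$ yet must be promoted to all of $\R$; here real-analyticity of $\cn$ (equivalently, that two analytic functions agreeing on an interval agree everywhere) does the work, and one should also check the borderline bookkeeping of which integer multiples arise so that ``even'' versus ``odd'' multiples of $T/2$ come out correctly rather than being conflated. I would also verify that the evenness of $\cn$ and its $T/2$-antiperiodicity together generate no extra coincidental solutions, i.e.\ that the only way $\cn$ can equal $\pm\cn$ of a reflected argument identically is through the congruences just described; this follows because $\cn$ is non-constant and its only symmetries are the stated ones. With these points settled, parts (i) and (ii) follow immediately from Proposition~\ref{Prop symmetry}.
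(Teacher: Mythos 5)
Your proposal is correct and follows essentially the same route as the paper: reduce to symmetry of the curvature via Proposition~\ref{Prop symmetry}, use the evenness of $\cn$ to turn the reflected argument into a translate of $as+b$ by the constant $aL+2b$, and then invoke the fact that translates preserving $\cn$ (resp.\ flipping its sign) are exactly the even (resp.\ odd) multiples of $T/2$. The paper compresses this into the identity $\kappa_{a,b}(L-s)=\sqrt{2}a\cn(as+b-(aL+2b))$ together with the statement that $\cn(z+kt)=(-1)^k\cn(z)$ for all $z,k$ iff $t=T/2$; your extra care about promoting the identity from $[0,L]$ to $\R$ by analyticity and about excluding coincidental symmetries of $\cn$ only makes explicit what the paper leaves implicit.
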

  \begin{proof}
    We apply Proposition \ref{Prop symmetry} to the special case $\hat\kappa=\kappa_{a,b}:[0,L]\to\R$
    for admissible parameters $a,b,L$. Hence, both (i) and (ii) follow from
    \begin{align*}
      \kappa_{a,b}(L-s)
      &= \sqrt{2}a\cn(a(L-s)+b) \\
      &= \sqrt{2}a\cn(-a(L-s)-b) \\
      &= \sqrt{2}a\cn(as+b-(aL+2b))
    \end{align*}
    and $\cn(z+kt)=(-1)^k\cn(z)$ for all $z\in\R,k\in\Z$ if and only if $t=T/2$.
  \end{proof}

  \medskip

  {\it Proof of Theorem \ref{Thm Navier kappa1=kappa2} (iii):}\; Let $\kappa\in\R$,
  $\sigma_1,\sigma_2\in\{-1,+1\}$ and $j\in\N_0$. The proof of Theorem \ref{Thm Navier kappa1=kappa2}
  tells us that every $(\sigma_1,\sigma_2,j)$-type solution of the symmetric Navier Problem \eqref{Gl Navier
  BVP symm} is generated by admissible parameters $a,b,L$ which satisfy equation \eqref{Gl b+aL}, in
  particular
  $$
    aL+2b
    = mT+(\sigma_2+\sigma_1)\cn^{-1}\big(\frac{\kappa}{\sqrt{2}a}\big)
    \qquad\text{for some }m\in\N_0.
  $$
  In case $\sigma_1=-\sigma_2$ this implies that $aL+2b$ is an even multiple of $T/2$ which, by
  Corollary~\ref{Cor symmetry}~(i), proves that the solution is axially symmetric. In case
  $\sigma_1=\sigma_2$ we find that the solution is axially symmetric if and only if
  $2\sigma_1\cn^{-1}\big(\tfrac{\kappa}{\sqrt{2}a}\big)$ is a multiple of $T$. This is equivalent to
  $\cn^{-1}\big(\tfrac{\kappa}{\sqrt{2}a}\big)\in \{0,T/2\}$ and thus to $|\kappa|=\sqrt{2}a$. Due to the
  restriction $z<1$ in \eqref{Gl F0 I},\eqref{Gl Fk I},\eqref{Gl Fk II} this equality cannot hold true so
  that these solutions are not axially symmetric. Finally we find that $\gamma$ is pointwise
  symmetric if and only if $\cn^{-1}\big(\tfrac{\kappa}{\sqrt{2}a}\big) = T/4$, i.e. if and only if
  $\kappa=0$.~\qed

  \subsection{Proof of Theorem \ref{Thm Navier kappa1!=kappa2}} \label{subsec Navier nonsymmertric} ~
  
  \medskip
  
  {\it 1st step:}\, We first show that a minimal index $j_0$ having the properties claimed in Theorem \ref{Thm
  Navier kappa1!=kappa2} exists. To this end let $\sigma_1,\sigma_2\in\{-1,+1\}$ and $k,l\in\N_0$ with $k<l$
  be arbitrary. Our aim is to show that if a $(\sigma_1,\sigma_2,k)$-type solution generated by a solution
  $a_k\in [a_0,\infty)$ of \eqref{Gl eq for a}$_k$ exists then there is a $(\sigma_1,\sigma_2,l)$-solution
  generated by some $a_l\in (a_k,\infty)$ and every such solution satisfies $a_l>a_k$ (i.e. $a_0\leq a_l\leq
  a_k$ cannot hold). Once this is shown it follows that there is a minimal index
  $j_0=j_0(\sigma_1,\sigma_2)$ such that for all $j\geq j_0$ a $(\sigma_1,\sigma_2,j)$-type solution
  $\gamma_j$ of \eqref{Gl Navier BVP} exists with
   $W(\gamma_{j_0})<W_{\gamma_{j_0+1}}<\ldots<W(\gamma_j)\to\infty$ as $j\to\infty$. The latter statement
  follows from $a_j\to \infty$ as $j\to\infty$. So it remains to prove the statement from above.
  
  \medskip
  
    For $l>k$ we have
    \begin{align*}
      \frac{2f_{\sigma_1,\sigma_2,l}(a_k,\kappa_1,\kappa_2)}{a_k^2} &+ \frac{(\kappa_2-\kappa_1)^2}{a_k^4}
      > \frac{2f_{\sigma_1,\sigma_2,k}(a_k,\kappa_1,\kappa_2)}{a_k^2} + \frac{(\kappa_2-\kappa_1)^2}{a_k^4}
      \stackrel{\eqref{Gl eq for a}_k}{=} \|B-A\|^2, \\
      &\text{and }\quad \lim_{a\to \infty} \Big( \frac{2f_{\sigma_1,\sigma_2,l}(a,\kappa_1,\kappa_2)}{a^2} +
      \frac{(\kappa_2-\kappa_1)^2}{a^4}\Big) = 0
    \end{align*}
    so that the intermediate value theorem provides at least one solution $a_l\in (a_k,\infty)$ of
    \eqref{Gl eq for a}$_l$. In addition, every such solution has to lie in $(a_k,\infty)$ because $a_0\leq
    a\leq a_k$ implies
    \begin{align*}
      \frac{2f_{\sigma_1,\sigma_2,l}(a,\kappa_1,\kappa_2)}{a^2} + \frac{(\kappa_2-\kappa_1)^2}{a^4}
      &\geq \frac{2f_{\sigma_1,\sigma_2,k}(a_k,\kappa_1,\kappa_2)}{a^2} + \frac{(\kappa_2-\kappa_1)^2}{a^4} \\
      &\geq \frac{2f_{\sigma_1,\sigma_2,k}(a_k,\kappa_1,\kappa_2)}{a_k^2} +
       \frac{(\kappa_2-\kappa_1)^2}{a_k^4}   \\
      &\stackrel{\eqref{Gl eq for a}_k}{=} \|B-A\|^2,
    \end{align*}
    where equality cannot hold simultaneously in both inequalities. Hence, equation \eqref{Gl eq for a}$_l$
    does not have a solution in the interval $[a_0,a_k]$ which finishes the first step.
  
  \medskip
  
  {\it 2nd step:}\, For given $\sigma_1,\sigma_2\in\{-1,+1\}$ let us show $j_0\leq j_0^*$ or equivalently that
  for all $j\geq j_0^*$ there is at least one $(\sigma_1,\sigma_2,j)$-type solution of the Navier problem
  \eqref{Gl Navier BVP}. So let $j\geq j_0^*$. Then \eqref{Gl def f} implies
  \begin{align*}
    &f_{\sigma_1,\sigma_2,j}(\kappa_1,\kappa_2,a_0)
    \geq (j-\frac{1}{2}) C
     \geq (j_0^*-\frac{1}{2}) C
     &&\text{if } \sigma_1=\sigma_2, \\
    &f_{\sigma_1,\sigma_2,j}(\kappa_1,\kappa_2,a_0)
    \geq (j-1)C
     \geq (j_0^*-1)C
     &&\text{if } \sigma_1=-\sigma_2
  \end{align*}
  so that the choice for $j_0^*$ from Theorem \ref{Thm Navier kappa1!=kappa2} and
  $\sqrt{2}a_0=\max\{|\kappa_1|,|\kappa_2|\}$ imply
  \begin{align*}
    \frac{2f_{\sigma_1,\sigma_2,j}(\kappa_1,\kappa_2,a_0)^2}{a_0^2}  &+ \frac{(\kappa_2-\kappa_1)^2}{a_0^4}
    =\frac{4 f_{\sigma_1,\sigma_2,j}(\kappa_1,\kappa_2,a_0)^2}{\max\{\kappa_1^2,\kappa_2^2\}} +
    \frac{4(\kappa_2-\kappa_1)^2}{\max\{\kappa_1^4,\kappa_2^4\}}
    \geq \|B-A\|^2, \\
    &\text{and }\quad \lim_{a\to \infty} \Big( \frac{2f_{\sigma_1,\sigma_2,j}(\kappa_1,\kappa_2,a)}{a^2} +
    \frac{(\kappa_2-\kappa_1)^2}{a^4}\Big) = 0.
  \end{align*}
  Hence, the intermediate value theorem provides at least one solution $a\in [a_0,\infty)$ of equation
  \eqref{Gl eq for a}$_j$ which, by Theorem \ref{Thm Navier BVP the general case}, implies that there is at
  least one $(\sigma_1,\sigma_2,j)$-type solution of the Navier problem \eqref{Gl Navier BVP}. 
  ~\qed
%

\section*{Appendix A -- Proof of Proposition \ref{Prop Navier}}
   
   \medskip 
    
  {\it Proof of part (i):}\;
   In view of the formula \eqref{Gl ansatz gamma allgemein} for regular curves it remains to prove that 
   for every $\kappa_0,\kappa_0'\in\R$ with $(\kappa_0,\kappa_0')\neq (0,0)$ the unique solution of the
   initial value problem 
   \begin{equation} \label{Gl Prop1 I}
     - \hat\kappa''(s) = \frac{1}{2}\hat\kappa(s)^3,\qquad \hat\kappa(0)=\kappa_0,\;\hat\kappa'(0)=\kappa_0'
   \end{equation}
   is given by $\kappa_{a,b}(s) = \sqrt{2}a \cn(as+b)$ where $a>0, b\in [-T/2,T/2)$ are chosen according to
   \begin{equation} \label{Gl Prop1 II}
     a= \big( \frac{1}{4}\kappa_0^4 + {\kappa_0'}^2\big)^{1/4},\qquad
     b= -\sign(\kappa_0') \cn^{-1}(\frac{\kappa_0}{\sqrt{2}a}).
   \end{equation}
   Indeed, \eqref{Gl Prop1 II} implies $\kappa_{a,b}(0)=
   \kappa_0$ and the equations $\cn'=-\sn\dn, \cn^4+2\sn^2\dn^2=1$ give
   \begin{align*}
     \kappa_{a,b}'(0)
     &=  \sqrt{2} a^2 \cn'(b)
     \;=\; -a^2 \cdot \sqrt{2}\sn(b)\dn(b)
     \:=\; -a^2 \sign(b)\sqrt{1-\cn^4(b)} \\
     &= a^2 \sign(\kappa_0')\sqrt{1-\frac{\kappa_0^4}{4a^4}}
     \;=\; \sign(\kappa_0')|\kappa_0'| \\
     &= \kappa_0'.
   \end{align*}
   In view of unique solvability of the initial value problem \eqref{Gl Prop1 I} this proves the claim.
   
   \medskip
   
   {\it Proof of part (ii):}\; This follows from setting $\tilde b:= b+aL -mT$ for $m\in\N_0$  such that
   $\tilde b\in [-T/2,T/2)$. \qed
   
\section{Appendix B -- The Dirichlet problem}

  In this section we want to provide the solution theory for the Dirichlet problem~\eqref{Gl Dirichlet
  BVP}. As in our analysis of the Navier problem we first reduce the boundary value problem \eqref{Gl
  Dirichlet BVP} for the regular curve $\gamma:[0,1]\to\R^2$ to the corresponding boundary value
  problem for its parametrization by arc-length $\hat\gamma:[0,L]\to\R^2$. Being given the
  ansatz for $\hat\gamma$ from \eqref{Gl ansatz gamma} (cf. Proposition \ref{Prop Navier}) one finds that
  solving~\eqref{Gl Dirichlet BVP} is equivalent to finding admissible parameters $a,b,L$ such that
  \begin{equation} \label{Gl Dirichlet BC}
      \vecII{\cos(\theta_1 + \int_0^L \kappa_{a,b}(r)\,dr)}{\sin(\theta_1+ \int_0^L \kappa_{a,b}(r)\,dr)}
      = \vecII{\cos(\theta_2)}{\sin(\theta_2)}
      \;\text{and}\;
      \vecII{\int_0^L \cos(\theta_1+\int_0^t \kappa_{a,b}(r)\,dr)\,dt}{\int_0^L \sin(\theta_1+\int_0^t
      \kappa_{a,b}(r)\,dr)\,dt} = B-A
  \end{equation}
  where we may without loss of generality assume $|\theta_2-\theta_1|\leq \pi$. In order to formulate our
  result we introduce the functions $\bar\alpha_j,\bar\beta$ and $\bar z\in [-1,1]$ as follows:
  \begin{align} \label{Gl Dirichlet Definition hatbeta hatalpha}
    \begin{aligned}
    \bar\beta(z;\eta) &:= z-\bar z,\qquad\text{where }
    \bar z:=\eta\cos(\theta_2-\theta_1+\sigma_1\arccos(z^2))^{1/2},  \\
    \bar\alpha_j(z;\eta) &:= \Big(j+\Big\lceil
    \frac{\sigma_1\cn^{-1}(z)-\sigma_2\cn^{-1}(\bar z)}{T}\Big\rceil\Big) C - \sigma_1\int_z^1
    \frac{t^2}{\sqrt{1-t^4}}\,dt + \sigma_2\int_{\bar z}^1 \frac{t^2}{\sqrt{1-t^4}}\,dt
    \end{aligned}
  \end{align}
  where $\eta\in\{-1,+1\},j\in\N_0$ and $z\in [-1,1]$ is chosen in such a way
  that $\bar z\in [-1,1]$ is well-defined. Moreover we will use the abbreviation
  $v:=R(-\theta_1)(B-A)\in\R^2$ or equivalently $v=(v_1,v_2)$ with
  \begin{align} \label{Gl Dirichlet Definition v}
    \begin{aligned}
    v_1 &= \quad\cos(\theta_1)(B_1-A_1)+\sin(\theta_1)(B_2-A_2),\\
    v_2 &= -\sin(\theta_1)(B_1-A_1)+\cos(\theta_1)(B_2-A_2).
    \end{aligned}
  \end{align}
  The solution theory for the Dirichlet problem then reads as follows:

  \begin{thm} \label{Thm Dirichlet}
    Let $A,B\in\R^2,A\neq B$ and $\theta_1,\theta_2\in\R$ be given such that $|\theta_2-\theta_1|\leq \pi$
    holds, let $\sigma_1,\sigma_2\in\{-1,+1\},j\in\N_0$.  Then all solutions $\gamma\in C^\infty([0,1],\R^2)$
    of the Dirichlet problem \eqref{Gl Dirichlet BVP} which are regular curves of type $(\sigma_1,\sigma_2,j)$
    are given by \eqref{Gl ansatz gamma} for
    \begin{align} \label{Gl Dirichlet abL}
      \begin{aligned}
      a&= \frac{\sqrt{2}(\bar\alpha_j(z;\eta)^2+\bar\beta(z;\eta)^2)^{1/2}}{\|B-A\|},\\
      b&= \sigma_1 \cn^{-1}(z),\\
      L &= \frac{\|B-A\| \big(\big(j+\big\lceil \frac{\sigma_1\cn^{-1}(z)-\sigma_2\cn^{-1}(\bar
       z)}{T}\big\rceil\big) T-\sigma_1\cn^{-1}(z) +\sigma_2\cn^{-1}(\bar z)
        \big)}{\sqrt{2}(\bar\alpha_j(z;\eta)^2+  \bar\beta(z;\eta)^2)^{1/2}}
      \end{aligned}
    \end{align}
	where $z\in [-1,1]$ is any solution of
	\begin{align}\label{Gl Dirichlet equation for z}
	  \begin{aligned}
      &\bar\beta(z;\eta)(z^2 v_1- \sigma_1 \sqrt{1-z^4}v_2)
      = \bar\alpha_j(z;\eta)(z^2v_2+\sigma_1\sqrt{1-z^4}v_1) \\
      &\text{provided }\qquad
      0\leq \sigma_2(\theta_2-\theta_1+\sigma_1\arccos(z^2))\leq \pi/2 \\
      &\text{and}\quad
      z=\pm 1\Rightarrow\sigma_1= \pm 1,\quad \bar z=\pm 1\Rightarrow\sigma_2= \pm 1,\quad
      z=\bar z,\sigma_1=\sigma_2 \Rightarrow j\geq 1.
      \end{aligned}
    \end{align}
	Here, $\bar\alpha_j,\bar\beta,\bar z,v$ are given by \eqref{Gl Dirichlet Definition hatbeta hatalpha} and
    \eqref{Gl Dirichlet Definition v}.
  \end{thm}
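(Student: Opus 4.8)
The plan is to follow the blueprint of the proof of Theorem~\ref{Thm Navier BVP the general case}, replacing the curvature boundary conditions by the direction conditions. By Proposition~\ref{Prop Navier} a regular curve solves \eqref{Gl Dirichlet BVP} if and only if it is given by \eqref{Gl ansatz gamma} for admissible parameters $a,b,L$ and $Q=R(\theta_1)$ such that the two vector equations in \eqref{Gl Dirichlet BC} hold. I would treat these two equations separately: the first (\emph{direction}) equation fixes the endpoint value $\bar z:=\cn(b+aL)$, while the second (\emph{position}) equation yields the scalar equation \eqref{Gl Dirichlet equation for z} for $z:=\cn(b)$ together with the formulas for $a$ and $L$.

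First I would analyse the direction condition. Abbreviating $\Phi(s):=\int_0^s\kappa_{a,b}(r)\,dr$, the first equation in \eqref{Gl Dirichlet BC} is equivalent to $\Phi(L)=\theta_2-\theta_1\pmod{2\pi}$. Using the antiderivative identity $\frac{d}{dr}g(ar+b)=\kappa_{a,b}(r)$ with $g(u):=2\arctan\big(\sn(u)/(\sqrt2\dn(u))\big)$ recorded in the proof of Theorem~\ref{Thm Navier BVP the general case}, one has $\Phi(L)=g(b+aL)-g(b)$. The key computation is that $\cos g(u)=\cn(u)^2$ (from $\cos(2\arctan x)=(1-x^2)/(1+x^2)$ and $2\dn^2=2-\sn^2$) and that $g$ is odd with range $[-\pi/2,\pi/2]$. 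Writing $b=\sigma_1\cn^{-1}(z)$ gives $g(b)=\sigma_1\arccos(z^2)$, hence $g(b+aL)=\theta_2-\theta_1+\sigma_1\arccos(z^2)$ and therefore $\bar z^2=\cn(b+aL)^2=\cos(\theta_2-\theta_1+\sigma_1\arccos(z^2))$, which is exactly the definition of $\bar z$ in \eqref{Gl Dirichlet Definition hatbeta hatalpha}, with $\eta=\sign(\bar z)$ recording the remaining sign ambiguity. Tracking the sign of $\sn(b+aL)$ through the $(\sigma_1,\sigma_2,j)$-type condition of Proposition~\ref{Prop Navier}~(ii) identifies $g(b+aL)=\sigma_2\arccos(\bar z^2)$ and thereby produces the constraint $0\le\sigma_2(\theta_2-\theta_1+\sigma_1\arccos(z^2))\le\pi/2$ in \eqref{Gl Dirichlet equation for z}; here the hypothesis $|\theta_2-\theta_1|\le\pi$ guarantees solvability, since $g$ has range $[-\pi/2,\pi/2]$.

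Next I would treat the position condition. After factoring out $Q=R(\theta_1)$ the second equation in \eqref{Gl Dirichlet BC} reads $\int_0^L\vecII{\cos\Phi(t)}{\sin\Phi(t)}\,dt=v$ with $v=R(-\theta_1)(B-A)$ as in \eqref{Gl Dirichlet Definition v}. Inserting the explicit expressions \eqref{Gl metaThm 2} for $\cos\Phi,\sin\Phi$ and integrating exactly as in \eqref{Gl metaThm 6} yields
\begin{align*}
  v_1 &= \tfrac{\sqrt2}{a}\big(z^2\bar\alpha_j+\sigma_1\sqrt{1-z^4}\,\bar\beta\big), \\
  v_2 &= \tfrac{\sqrt2}{a}\big(-\sigma_1\sqrt{1-z^4}\,\bar\alpha_j+z^2\bar\beta\big),
\end{align*}
where $\bar\alpha_j=\tfrac{a}{\sqrt2}\int_0^L\cn(as+b)^2\,ds$ and $\bar\beta=\tfrac{a}{\sqrt2}\int_0^L\sqrt2\sn\dn\,ds=z-\bar z$; the identity \eqref{Gl metaThm 4}, now evaluated with $z,\bar z$ in place of $\kappa_i/(\sqrt2 a)$, shows that $\bar\alpha_j$ equals the expression in \eqref{Gl Dirichlet Definition hatbeta hatalpha}, and $\sqrt2\sn(b)\dn(b)=\sigma_1\sqrt{1-z^4}$ follows from $\cn^4+2\sn^2\dn^2=1$ together with $\sign(\sn(b))=\sigma_1$. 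Eliminating the common factor $\sqrt2/a$ by cross-multiplying $v_1,v_2$ gives precisely \eqref{Gl Dirichlet equation for z}. Finally the two displayed equations give $v_1^2+v_2^2=\tfrac{2}{a^2}(\bar\alpha_j^2+\bar\beta^2)$ once the cross terms $2z^2\sigma_1\sqrt{1-z^4}\,\bar\alpha_j\bar\beta$ cancel, and since $\|v\|=\|B-A\|$ this yields the formula for $a$; the formula for $L$ then follows from the length identity in \eqref{Gl b+aL} with $b=\sigma_1\cn^{-1}(z)$ and $\tilde b=\sigma_2\cn^{-1}(\bar z)$.

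The hard part will be the sign bookkeeping in the direction step: establishing $\cos g=\cn^2$ is routine, but correctly relating the branch of $\arccos$, the sign $\eta=\sign(\bar z)$, and the type data $(\sigma_1,\sigma_2,j)$ — so that $\bar z$ is well defined and the constraint $0\le\sigma_2(\theta_2-\theta_1+\sigma_1\arccos(z^2))\le\pi/2$ together with the implications $z=\pm1\Rightarrow\sigma_1=\pm1$, $\bar z=\pm1\Rightarrow\sigma_2=\pm1$ and $z=\bar z,\sigma_1=\sigma_2\Rightarrow j\ge1$ are exactly the admissible ones — requires a careful case analysis mirroring Proposition~\ref{Prop Navier}~(ii). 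Everything else reduces to computations already carried out for the Navier problem.
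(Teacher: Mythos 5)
Your proposal is correct and follows essentially the same two-step route as the paper's own proof: the direction equation is integrated to give $\sigma_2\arccos(\bar z^2)=\theta_2-\theta_1+\sigma_1\arccos(z^2)$ (hence the formula for $\bar z$, the sign $\eta$, and the inequality constraint), and the position equation, rewritten via \eqref{Gl metaThm 2} and the rotation $R(\theta_1)$, yields $\alpha=\tfrac{\sqrt2}{a}\bar\alpha_j$, $\beta=\tfrac{\sqrt2}{a}\bar\beta$, from which the cross-multiplied equation \eqref{Gl Dirichlet equation for z} and the formulas for $a,b,L$ follow. The only cosmetic difference is that you phrase the direction step through the explicit antiderivative $g(u)=2\arctan\bigl(\sn(u)/(\sqrt2\dn(u))\bigr)$ with $\cos g=\cn^2$, while the paper integrates $\sqrt2\cn$ directly using $\arccos(\cn^2)'=\sqrt2\cn$ — these are the same identity, and like the paper you (legitimately) omit the sufficiency check.
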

  \begin{proof}
    As in the proof of Theorem \ref{Thm Navier BVP the general case} we only show that a
    $(\sigma_1,\sigma_2,j)$-type solution of \eqref{Gl Dirichlet BVP} generated by
    $a,b,L$ satisfies the conditions stated above, i.e. we
    will not check that these conditions are indeed sufficient. By Proposition \ref{Prop Navier} (ii) we may
    write
    \begin{align} \label{Gl Dirichlet parameters}
      \begin{aligned}
      b+aL=mT+\bar b\quad\text{for } m=j+ \lceil (b-\bar b)/T\rceil \in\N_0 \text{ and } \\
      b,\bar b \in [-T/2,T/2) \quad\text{such that}\quad
      \sign(b)=\sigma_1,\;\sign(\bar b)=\sigma_2.
      \end{aligned}
    \end{align}
    In the following we set $z:= \cn(b),\bar z:=\cn(\bar b)$ so that our conventions
    $\sign(0)=1$ and $b,\bar b\in [-T/2,T/2)$ and $L>0$ imply the third line in
    \eqref{Gl Dirichlet equation for z}.

    \medskip

    {\it 1st step:}\; Using the first set of equations in  \eqref{Gl Dirichlet BC} we prove the second
    line in \eqref{Gl Dirichlet equation for z} and
    \begin{align} \label{Gl Dirichlet cn(tildeb)}
      \bar z
      =  \eta \cos(\theta_2-\theta_1+\sigma_1\arccos(z^2))^{1/2}
       \quad\text{for some }\eta\in\{-1,+1\}.
    \end{align}
    Indeed, \eqref{Gl Dirichlet BC} implies that there is some $l\in\Z$ such that
    \begin{align} \label{Gl Dirichlet angles}
      \begin{aligned}
      \theta_2-\theta_1 +2l\pi
      &= \int_0^L \kappa_{a,b}(r)\,dr \\
      &=  \int_b^{b+aL} \sqrt{2}\cn(r)\,dr \\
      &\stackrel{\eqref{Gl Dirichlet parameters}}{=}  m\int_0^T \sqrt{2}\cn(r)\,dr + \int_b^{\bar b}
      \sqrt{2}\cn(r)\,dr \\
      &=  0 -\sigma_1\int_0^{|b|} \sqrt{2}\cn(r)\,dr + \sigma_2 \int_0^{|\bar b|} \sqrt{2}\cn(r)\,dr   \\
      &= -\sigma_1\arccos(\cn^2(b))+\sigma_2\arccos(\cn^2(\bar b)) \\
      &= -\sigma_1\arccos(z^2)+\sigma_2\arccos(\bar z^2).
      \end{aligned}
    \end{align}
    (Since we have $|\theta_2-\theta_1|\leq \pi$ 
    and since the right hand side in \eqref{Gl Dirichlet angles} lies in $[-\pi/2,\pi/2]$
    we even know $l=0$.) Rearranging the above equation and applying sine respectively cosine gives
    \begin{align*}
      \sigma_2 \sqrt{1-\bar z^4}
      &= \sin(\sigma_2\arccos(\bar z^2))
      = \sin(\theta_2-\theta_1+\sigma_1\arccos(z^2)), \\
      \bar z^2
      &= \cos(\sigma_2\arccos(\bar z^2))
      = \cos( \theta_2-\theta_1+\sigma_1\arccos(z^2))
    \end{align*}
    which proves the second line in \eqref{Gl Dirichlet equation for z} and \eqref{Gl Dirichlet cn(tildeb)}.

    \medskip

    {\it 2nd step:}\; Next we use the second set of equations in \eqref{Gl Dirichlet BC}
    to derive the first line in \eqref{Gl Dirichlet equation for z}  and the formulas for $a,b,L$ from
    \eqref{Gl Dirichlet abL}. Using the addition theorems for sine and cosine we find
    \begin{align} \label{Gl Dirichlet v}
      \begin{aligned}
      v
      &= R(\theta_1)^T(B-A) \\
      &\stackrel{\eqref{Gl Dirichlet BC}}{=}
        R(\theta_1)^T \vecII{\int_0^L \cos(\theta_1+\int_0^t \kappa_{a,b}(r)\,dr)\,dt}{\int_0^L\sin(\theta_1+
        \int_0^t \kappa_{a,b}(r)\,dr)\,dt} \\
      &= R(\theta_1)^T R(\theta_1) \vecII{\int_0^L \cos(\int_0^t \kappa_{a,b}(r)\,dr)\,dt}{\int_0^L
      \sin(\int_0^t \kappa_{a,b}(r)\,dr)\,dt} \\
      &\stackrel{\eqref{Gl metaThm 2}}{=}
  	  \matII{\cn^2(b)}{\sqrt{2}\sn(b)\dn(b)}{-\sqrt{2}\sn(b)\dn(b)}{\cn^2(b)} \vecII{\alpha}{\beta} \\
  	  &=  \matII{z^2}{\sigma_1\sqrt{1-z^4}}{-\sigma_1\sqrt{1-z^4}}{z^2} \vecII{\alpha}{\beta}
	  \end{aligned}
    \end{align}
    where $\alpha,\beta$ were defined in \eqref{Gl metaThm 3}. From \eqref{Gl Dirichlet v} we obtain
    the equations
    \begin{equation} \label{Gl Dirichlet alphabeta}
      \alpha = z^2v_1 -\sigma_1\sqrt{1-z^4}v_2,\qquad
      \beta =  \sigma_1\sqrt{1-z^4}v_1 + z^2v_2.
    \end{equation}
    Straightforward calculations give
    \begin{align*}
      \beta
      &= \frac{\sqrt{2}}{a}\int_b^{b+aL}\sn(t)\dn(t)\,dt
      = \frac{\sqrt{2}}{a}(\cn(b)-\cn(b+aL))
      = \frac{\sqrt{2}}{a}(z-\bar z)
      = \frac{\sqrt{2}}{a}\bar\beta(z;\eta)
    \intertext{and}
      \alpha
      &= \frac{1}{a}\int_b^{b+aL}\cn^2(t)\,dt
       \;=\; \frac{1}{a}\int_b^{mT+\bar b}\cn^2(t)\,dt \\
      &= \frac{1}{a}\Big( m\int_0^T\cn^2(t)\,dt - \sigma_1 \int_0^{|b|}\cn^2(t)\,dt + \sigma_2
       \int_0^{|\bar b|}\cn^2(t)\,dt \Big)   \\
      &= \frac{\sqrt{2}}{a}\Big(mC
          - \sigma_1\int_{z}^1 \frac{t^2}{\sqrt{1-t^4}}\,dt
          + \sigma_2\int_{\bar z}^1 \frac{t^2}{\sqrt{1-t^4}}\,dt
         \Big)\\
      &= \frac{\sqrt{2}}{a} \bar\alpha_j(z;\eta)
    \end{align*}
    where we have used $m=j+ \lceil (b-\bar b)/T \rceil$ as well as
    $b=\sigma_1\cn^{-1}(z),\bar b=\cn^{-1}(\bar z)$ in order to derive the last equality. Using
    $\alpha^2+\beta^2=\|v\|^2=\|B-A\|^2$ from \eqref{Gl Dirichlet v} we get
    the formula for $a$ from \eqref{Gl Dirichlet abL} as well as
    $$
      \bar\beta(z;\eta)(z^2 v_1- \sigma_1 \sqrt{1-z^4}v_2)
      \stackrel{\eqref{Gl Dirichlet alphabeta}}{=} \frac{a}{\sqrt{2}}\cdot \alpha\beta
      \stackrel{\eqref{Gl Dirichlet alphabeta}}{=}  \bar\alpha_j(z;\eta)(z^2v_2+\sigma_1\sqrt{1-z^4}v_1)
    $$
    so that the first line in \eqref{Gl Dirichlet equation for z} holds true. Finally, the formula $b=
    \sigma_1\cn^{-1}(z)$ immediately follows from $\cn(b)=z,\sign(b)=\sigma_1$ and the formula for $L$ is a direct
    consequence of the formulas for $a,b,\bar b$ and $aL=mT+\bar b-b$, see \eqref{Gl Dirichlet parameters}.
  \end{proof}

  Let us finally show how the results of Deckelnick and Grunau \cite{DecGru_Boundary_value_problems} can
  be reproduced by Theorem~\ref{Thm Dirichlet}. In  \cite{DecGru_Boundary_value_problems} the authors analysed
  the Dirichlet problem for graph-type curves given by $\gamma(t)=(t,u(t))$
  where $u:[0,1]\to\R$ is a smooth positive symmetric function. The Dirichlet boundary conditions were given
  by $u(0)=u(1)=0,u'(0)=-u'(1)=\beta$ for $\beta>0$ which in our setting corresponds to $A=(0,0),B=(1,0)$ and
  $\theta_1=-\theta_2=\arctan(\beta)>0$.
  They showed that for any given $\beta>0$ this boundary value problem has precisely one symmetric positive
  solution $u$. This result can be derived from Theorem \ref{Thm Dirichlet} in the following way.
  Looking for graph-type solutions of \eqref{Gl Dirichlet BVP} requires to restrict the attention to
  $(\sigma_1,\sigma_2,j)$-type solutions with $j=0$.  The symmetry condition and Corollary~\ref{Cor
  symmetry}~(i) imply that $aL+2b$ is a multiple of~$T$ so that
  $\sigma_1\cn^{-1}(z)+\sigma_2\cn^{-1}(\bar z)$ has to be a multiple of $T$.
  From this and $\theta_2-\theta_1=-2\theta_1<0$ one can rule out the case $\sigma_1=\sigma_2$ and further
  reductions yield ${\sigma_1=1},{\sigma_2=-1},z=\bar z= \eta \cos(\theta_1)^{1/2}$ for some
  $\eta\in\{-1,+1\}$.
  Finally one can check that $\eta=1$ produces a symmetric solution which is not of graph-type in the
  sense of \cite{DecGru_Boundary_value_problems} since the slope becomes $\pm\infty$ at two points and thus
  $u\notin C^2((0,1),\R^2)$. Hence, $\eta$ must be -1 so that we end up with
  $$
    a = \sqrt{2}\Big(C-2\int_{z}^1 \frac{t^2}{\sqrt{1-t^4}}\,dt\Big),\qquad
    b= \cn^{-1}(z),\qquad
    L = \frac{T-2\cn^{-1}(z)}{\sqrt{2}(C-2\int_{z}^1\frac{t^2}{\sqrt{1-t^4}}\,dt)}
  $$
   for $z=-\cos(\theta_1)^{1/2}$
  and one may verify that this choice for $a,b,L$ indeed solves \eqref{Gl Dirichlet equation for z} and
  generates the solution found in \cite{DecGru_Boundary_value_problems}. The following Corollary shows that
  this particular axially symmetric solution is only one of infinitely many other axially symmetric solutions.
  Moreover we prove that axially non-symmetric solutions exist.

  \begin{cor}
    Let $A=(0,0),B=(1,0)$ and $\theta_1=-\theta_2\in (0,\pi/2)$. Then the Dirichlet problem~\eqref{Gl
    Dirichlet BVP} has infinitely many axially symmetric solutions $(\gamma_{1,j})$ and infinitely many
    axially non-symmetric solutions $(\gamma_{2,j})$ satisfying $W(\gamma_{1,j}),W(\gamma_{2,j})\to\infty$ as
    $j\to\infty$.
  \end{cor}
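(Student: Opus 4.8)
The plan is to apply Theorem~\ref{Thm Dirichlet} in the special case $A=(0,0),B=(1,0),\theta_1=-\theta_2\in(0,\pi/2)$ and to exhibit, for each sufficiently large index $j$, both an axially symmetric and an axially non-symmetric $(\sigma_1,\sigma_2,j)$-type solution. The idea is exactly parallel to the 2nd step in the proof of Theorem~\ref{Thm Navier kappa1!=kappa2}: one fixes the signs $(\sigma_1,\sigma_2)$ and then uses the intermediate value theorem to solve \eqref{Gl Dirichlet equation for z} for $z$, while showing that the generating parameter $a$ from \eqref{Gl Dirichlet abL} diverges as $j\to\infty$, which by the energy formula $W(\gamma)=a^2\|B-A\|^2$ (specialized from Theorem~\ref{Thm Navier BVP the general case}) forces $W(\gamma_{i,j})\to\infty$.

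First I would produce the axially symmetric family $(\gamma_{1,j})$. The computation already carried out in the excerpt for the graph case (following Deckelnick–Grunau) shows that the symmetry condition $aL+2b\in T\Z$ from Corollary~\ref{Cor symmetry}~(i), together with $\theta_2-\theta_1=-2\theta_1<0$, forces $\sigma_1=1,\sigma_2=-1$ and $z=\bar z=-\cos(\theta_1)^{1/2}$. The only free discrete parameter left is $j\in\N_0$, and the preceding discussion solved the case $j=0$. For general $j$ one simply keeps $z=-\cos(\theta_1)^{1/2}$ fixed and reads off
\begin{align*}
  a_{1,j} = \sqrt{2}\Big(\big(j+\lceil 2\cn^{-1}(z)/T\rceil\big)C-2\int_z^1\frac{t^2}{\sqrt{1-t^4}}\,dt\Big)
\end{align*}
from \eqref{Gl Dirichlet abL}; one checks that \eqref{Gl Dirichlet equation for z} is satisfied (the symmetric choice $z=\bar z$ makes the first line of \eqref{Gl Dirichlet equation for z} hold because $\alpha\beta$ and the relevant combination align as in the $j=0$ case), that the side conditions in \eqref{Gl Dirichlet equation for z} hold for $j\geq 1$ via the clause $z=\bar z,\sigma_1=\sigma_2\Rightarrow j\geq 1$—though here $\sigma_1\neq\sigma_2$ so no restriction arises—and that $a_{1,j}\to\infty$ linearly in $j$. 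This yields infinitely many axially symmetric solutions.

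Next I would produce the axially non-symmetric family $(\gamma_{2,j})$ by choosing $\sigma_1=\sigma_2=1$ (or any same-sign pair) and solving \eqref{Gl Dirichlet equation for z} for $z$ in $(-1,1)$. For such a choice $aL+2b$ is generically not a multiple of $T$, so by Corollary~\ref{Cor symmetry}~(i) the resulting curve fails to be axially symmetric. The existence of a solution $z$ of the transcendental equation \eqref{Gl Dirichlet equation for z} for every large $j$ follows again from an intermediate-value argument: as $j$ increases, $\bar\alpha_j(z;\eta)$ grows by steps of $C$ while $\bar\beta(z;\eta)$ stays bounded, so the two sides of the scalar equation can be matched on a suitable subinterval of admissible $z$ where the side conditions $0\leq\sigma_2(\theta_2-\theta_1+\sigma_1\arccos(z^2))\leq\pi/2$ hold. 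Since $a_{2,j}=\sqrt{2}(\bar\alpha_j^2+\bar\beta^2)^{1/2}/\|B-A\|\to\infty$, the energies diverge.

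The main obstacle will be verifying that \eqref{Gl Dirichlet equation for z} genuinely has a solution $z$ in the admissible range for the non-symmetric signs for all large $j$, since here—unlike the symmetric family where $z$ is pinned down explicitly—$z$ must be found implicitly and the constraint region carved out by the second line of \eqref{Gl Dirichlet equation for z} (together with $\bar z$ being well-defined) could in principle be empty. I would handle this by examining the behavior of the single scalar equation as a function of $z$ at the endpoints of the admissible interval and invoking continuity; the growth of $\bar\alpha_j$ with $j$ ensures that for $j$ large the balance dictated by \eqref{Gl Dirichlet equation for z} is attainable. Once existence and $a_{i,j}\to\infty$ are established, the divergence of the Willmore energies and the disjointness of the two families (symmetric versus non-symmetric, by Corollary~\ref{Cor symmetry}) complete the proof.
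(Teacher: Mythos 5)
Your construction of the axially symmetric family is essentially the paper's: fix $\sigma_1=1,\sigma_2=-1$, take $z=\bar z=\eta\cos(\theta_1)^{1/2}$ (the paper allows both $\eta=\pm 1$, you take $\eta=-1$), note $\bar\beta(z;\eta)=0$ so that \eqref{Gl Dirichlet equation for z} reduces to $-z^2\sin\theta_1+\sqrt{1-z^4}\cos\theta_1=0$, which holds since $z^2=\cos\theta_1$, and let $j$ run. That part is fine, modulo actually writing out the verification you only assert.

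The non-symmetric family is where there is a genuine gap. You propose $\sigma_1=\sigma_2=1$ (``or any same-sign pair''), but the admissibility constraint in the second line of \eqref{Gl Dirichlet equation for z} reads $0\leq\sigma_2(\theta_2-\theta_1+\sigma_1\arccos(z^2))\leq\pi/2$ with $\theta_2-\theta_1=-2\theta_1$; for $\sigma_1=\sigma_2=1$ this forces $\arccos(z^2)\geq 2\theta_1$, and for $\sigma_1=\sigma_2=-1$ it forces $\arccos(z^2)\leq \pi/2-2\theta_1$. Since $\arccos(z^2)\in[0,\pi/2]$, in either case the admissible set of $z$ is \emph{empty} whenever $\theta_1>\pi/4$, so no $(\sigma_1,\sigma_2,j)$-type solution with $\sigma_1=\sigma_2$ exists there and your construction cannot cover the full range $\theta_1\in(0,\pi/2)$. (This is exactly the failure mode you flagged as ``the main obstacle'' but did not resolve; ``generically not a multiple of $T$'' is also not an argument, although for $\sigma_1=\sigma_2$ and $z\in(-1,1)$ non-symmetry would in fact be automatic since $\cn^{-1}(z)+\cn^{-1}(\bar z)\in(0,T)$.) The paper avoids this entirely by staying with $\sigma_1=1,\sigma_2=-1$ and instead flipping the branch parameter to $\eta=+1$: it applies the intermediate value theorem to the explicit function $f_j(z)=\bar\beta(z;1)(z^2\cos\theta_1+\sqrt{1-z^4}\sin\theta_1)-\bar\alpha_j(z;1)(-z^2\sin\theta_1+\sqrt{1-z^4}\cos\theta_1)$ on $(-1,-z^*)$ with $z^*=\cos(\theta_1)^{1/2}$, computing the endpoint values $f_j(-z^*)=-2\cos(\theta_1)^{1/2}<0$ and $f_j(-1)>0$ for $j\geq j_0$, and then deduces non-symmetry from $-1<z<-z^*<0<\bar z<1$, which makes $\cn^{-1}(z)-\cn^{-1}(\bar z)$ a nonzero element of $(-T/2,T/2)$. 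To repair your argument you would either need to restrict to $\theta_1\leq\pi/4$ and supply the missing endpoint estimates, or switch to the paper's choice of signs and branch.
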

  \begin{proof}
    The existence of infinitely many axially symmetric solutions $(\gamma_{1,j})_{j\in\N_0}$ of \eqref{Gl
    Dirichlet BVP} follows from Theorem \ref{Thm Dirichlet}, Corollary \ref{Cor symmetry} and the fact that
    for every $\eta\in\{-1,+1\}$ and every $j\in\N_0$ a solution of \eqref{Gl Dirichlet equation for z} is
    given by $\sigma_1=1,\sigma_2=-1,z=\bar z=\eta\cos(\theta_1)^{1/2}$. The formula for $a$ and $W$ from
    Theorem \ref{Thm Dirichlet} implies that these solutions satisfy $W(\gamma_{1,j})\to\infty$ as
    $j\to\infty$ and $\sigma_1\cn^{-1}(z)+\sigma_2\cn^{-1}(\bar z)=0$ implies that $\gamma_{1,j}$ is axially
    symmetric.
    
    \medskip
       
    The existence of infinitely many axially non-symmetric solutions $(\gamma_{2,j})$ may be proved for
    parameters $\sigma_1=1,\sigma_2=-1,\eta=1$ and all $j\in\N_0$ satisfying $j\geq j_0$ where $j_0\in\N$ is
    chosen such that the inequality
    $$
      \bar\beta(-1;1)\cos(\theta_1)+\sin(\theta_1)\bar\alpha_{j_0}(-1;1) >0
    $$
    holds. Indeed, using $v_1=\cos(\theta_1),v_2=-\sin(\theta_1)$ in the setting of Theorem \ref{Thm
    Dirichlet} it suffices to find a zero of the function
    $$
      f_j(z)
      := \bar\beta(z;1)(z^2 \cos(\theta_1)+ \sqrt{1-z^4}\sin(\theta_1))
      -\bar\alpha_j(z;1)(-z^2\sin(\theta_1)+\sqrt{1-z^4}\cos(\theta_1))
    $$
    in $(-1,1)$. To this end we apply the intermediate value theorem to $f_j$ on the interval
    $(-1,-z^*)$ where $z^*:=\cos(\theta_1)^{1/2}$. Calculations and $j\geq j_0$ imply
    $$
      f_j(-z^*)=-2\cos(\theta_1)^{1/2}<0,\qquad
      f_j(-1) = \bar\beta(-1;1)\cos(\theta_1)+\bar\alpha_j(-1;1)\sin(\theta_1)   >0
    $$
    so that the intermediate value theorem provides the existence of at least one zero in this interval.
    Hence, $-1<z<-z^*<0<\bar z<1$ implies that $\sigma_1 \cn^{-1}(z)+\sigma_2\cn^{-1}(\bar z)$ is not a
    multiple of $T$ proving that the constructed solutions are not axially symmetric.
  \end{proof}

 \section*{Acknowledgements}
 
 The work on this project was financially supported by the Deutsche Forschungsgemeinschaft (DFG, German
 Research Foundation) - grant number MA 6290/2-1.

\bibliographystyle{plain}

\end{document}